\numberwithin{equation}{section}
\newtheorem{thm}{Theorem}[section]
\newtheorem{prop}[thm]{Proposition}
\newtheorem{lem}[thm]{Lemma}
\newtheorem{cor}[thm]{Corollary}
\theoremstyle{definition}
\theoremstyle{remark}
\newtheorem{rmk}[thm]{Remark}
\newcommand{\CC}{\mathbb{C}}
\newcommand{\NN}{\mathbb{N}}
\newcommand{\RR}{\mathbb{R}}
\newcommand{\TT}{\mathbb{T}}
\newcommand{\ZZ}{\mathbb{Z}}
\def\F{\mathcal{F}}
\def\LL{\mathcal{L}}
\def\KK{\mathcal{K}}
\def\T{\mathcal{T}}
\def\O{\mathcal{O}}
\newcommand{\Ind}{\operatorname{Ind}}
\newcommand{\Tr}{\operatorname{Tr}}
\newcommand{\KMS}{\operatorname{KMS}}
\newcommand{\Aut}{\operatorname{Aut}}
\newcommand{\lsp}{\operatorname{span}}
\newcommand{\clsp}{\overline{\lsp}}
\title{\boldmath{KMS states on the $C^*$-algebras of finite graphs}}
\author[an Huef]{Astrid an Huef}
\author[Laca]{Marcelo Laca}
\author[Raeburn]{Iain Raeburn}
\author[Sims]{Aidan Sims}
\address{Astrid an Huef and Iain Raeburn\\ Department of Mathematics and Statistics\\University of Otago\\PO Box 56\\Dunedin 9054\\New Zealand}
\email{astrid@maths.otago.ac.nz, iraeburn@maths.otago.ac.nz}
\address{Marcelo Laca, Department of Mathematics and Statistics\\
University of Victoria\\
Victoria, BC V8W 3P4\\
Canada}
\email{laca@math.uvic.ca}
\address{Aidan  Sims\\ School of Mathematics and Applied Statistics\\
University of Wollongong\\NSW 2522\\Australia}
\email{asims@uow.edu.au}
\date{10 May 2011}
\subjclass[2010]{46L30, 46L55}
\thanks{This research has been supported by the University of Otago, the Marsden Fund of the Royal Society of New Zealand, the Natural Sciences and Engineering Research Council of Canada, and the Australian Research Council.}
\begin{document}

\begin{abstract}
We consider a finite directed graph $E$, and the gauge action on its Toeplitz-Cuntz-Krieger algebra, viewed as an action of $\RR$. For inverse temperatures
larger than a critical value $\beta_c$, we give an explicit construction of all the KMS$_\beta$ states. If the graph is strongly connected, then there is a unique
KMS$_{\beta_c}$ state, and this state factors through the quotient map onto $C^*(E)$. Our approach
is direct and relatively elementary.
\end{abstract}
\maketitle

Fix an integer $n\geq 2$, and consider the action  $\alpha$ of $\RR$ lifted from the gauge action of $\TT$ on the Cuntz algebra $\O_n$. Olesen and Pedersen \cite{OP} showed that $(\O_n,\alpha)$ has a unique KMS state, which occurs at inverse temperature $\ln n$. Enomoto, Fujii and Watatani \cite{EFW} extended this to the Cuntz-Krieger algebras $\O_A$. For an irreducible matrix $A$, they found that the unique KMS state has inverse temperature $\ln \rho(A)$, where $\rho(A)$ is the spectral radius of $A$, or equivalently the Perron-Frobenius eigenvalue of $A$.

There are now many generalisations of the Cuntz-Krieger algebras, and much is known about their KMS states. In particular, Exel and Laca \cite{EL} have conducted an extensive analysis of the KMS states on the Cuntz-Krieger algebras of infinite $\{0,1\}$-matrices. Their analysis is very general: they consider actions arising from embeddings of $\RR$ in the infinite torus $\TT^\infty$, and they study also the Toeplitz extension of the Cuntz-Krieger algebra, which has many more KMS states. So it was something of a surprise when Kajiwara and Watatani \cite{KW} pinpointed a gap in our knowledge of KMS states for the gauge action on the Cuntz-Krieger algebras of finite graphs: because Exel and Laca assumed that their matrices had no zero rows, their results do not apply to graphs with sources. Kajiwara and Watatani showed that the existence of sources makes a big difference (see Corollary~\ref{kwcor} below).

Here we describe the KMS states for the gauge action on the Toeplitz algebras of finite graphs. For a graph with vertex matrix $A$ and $\beta>\ln\rho(A)$, our main theorem gives an explicit isomorphism of the simplex of KMS$_\beta$ states on the Toeplitz algebra onto a simplex of codimension $1$ in $\CC^{E^0}$. Our methods are relatively elementary, and follow the general pattern of \cite{LR,LRR}.

After a brief review of background material, we begin in \S\ref{sec:id} by characterising KMS
states on graph algebras in terms of their behaviour on the usual spanning family. In
\S\ref{sec:Toe} we prove our main theorem about the KMS states on the Toeplitz algebra of a graph.
In \S\ref{sec:CK} we consider a strongly connected graph $E$, and prove that there is a unique KMS$_{\ln\rho(A)}$ state on $\T C^*(E)$, which factors through the KMS$_{\ln\rho(A)}$ state of $C^*(E)$ from \cite{EFW}.  In \S\ref{sec:ground}, we describe the ground and KMS$_\infty$
states. In Section~\ref{sec:connections}, we relate our results to those in \cite{EL}\ and
\cite{KW}. We also discuss how they relate to the powerful machines of Laca-Neshveyev \cite{LN} for
studying KMS states on Cuntz-Pimsner algebras and of Renault-Neshveyev \cite{Ren, N} for groupoid
algebras. We finish with an appendix on the possible values of the spectral radius for vertex
matrices of finite graphs.

\section{Background}\label{sec:back}
We use the conventions of \cite{R} for directed graphs $E$  and their $C^*$-algebras $C^*(E)$. We also borrow the convention from the higher-rank graph literature in which we write, for example, $E^*v$ for $\{\mu\in E^*:s(\mu)=v\}$, and $vE^1w$ for $\{e\in E^1:r(e)=v,\ s(e)=w\}$.

Suppose that $E$ is a directed graph. A \emph{Toeplitz-Cuntz-Krieger $E$-family $(P,S)$} consists of mutually orthogonal projections $\{P_v:v\in E^0\}$ and partial isometries $\{S_e:e\in E^1\}$ such that $S^*_eS_e=P_{s(e)}$ and
\begin{equation}\label{TCKcond}
P_v\geq \sum_{e\in F}S_eS_e^*\quad\text{for every finite subset $F$ of $vE^1$.}
\end{equation}
The definition used in \cite{FR} and \cite{R} requires that the partial isometries $\{S_e:e\in
E^1\}$ have mutually orthogonal ranges, but it turns out that this orthogonality follows from the
other relations. To see this we need a simple lemma.

\begin{lem}\label{critperp}
Suppose that $P$ and $Q$ are projections on a Hilbert space $H$ and $\|P+Q\|\leq 1$. Then $P$ and $Q$ have orthogonal ranges.
\end{lem}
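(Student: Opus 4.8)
The plan is to prove the equivalent statement that $PQ=0$. The one identity that makes everything work is that, since $P=P^*=P^2$, we have $\langle P\xi,\xi\rangle=\langle P^2\xi,\xi\rangle=\langle P\xi,P\xi\rangle=\|P\xi\|^2$ for every $\xi\in H$, and likewise for $Q$.

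First I would fix an arbitrary $\xi\in H$ and estimate
\[
\|P\xi\|^2+\|Q\xi\|^2=\langle P\xi,\xi\rangle+\langle Q\xi,\xi\rangle=\langle (P+Q)\xi,\xi\rangle\le \|(P+Q)\xi\|\,\|\xi\|\le\|P+Q\|\,\|\xi\|^2\le\|\xi\|^2,
\]
where the second inequality is Cauchy--Schwarz and the last uses the hypothesis $\|P+Q\|\le 1$. Next I would apply this with $\xi$ in the range of $Q$: then $Q\xi=\xi$, so $\|Q\xi\|=\|\xi\|$, and the inequality collapses to $\|P\xi\|^2\le 0$, forcing $P\xi=0$. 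Thus $P$ annihilates the range of $Q$; taking $\xi=Q\eta$ for arbitrary $\eta\in H$ then gives $PQ=0$, which is exactly the assertion that $\operatorname{ran}Q\subseteq\ker P=(\operatorname{ran}P)^\perp$.

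I do not expect any real obstacle here: the argument is a couple of lines once one spots the identity $\langle(P+Q)\xi,\xi\rangle=\|P\xi\|^2+\|Q\xi\|^2$. The only point worth a moment's care is that $\|P+Q\|\le 1$ is used purely as the numerical bound $\langle (P+Q)\xi,\xi\rangle\le\|P+Q\|\,\|\xi\|^2$; one should resist the temptation to argue via $P+Q$ being a projection, since in general it is not.
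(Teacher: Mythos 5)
Your argument is correct and is essentially the paper's: both proofs fix a vector in the range of one projection and use $\|P+Q\|\le 1$ together with the identity $\langle R\xi,\xi\rangle=\|R\xi\|^2$ for a projection $R$ to force the other projection to annihilate it. The only cosmetic difference is that you bound the quadratic form $\langle(P+Q)\xi,\xi\rangle$ via Cauchy--Schwarz, whereas the paper expands $\|Ph+Qh\|^2$ directly for $h$ in the range of $P$; both yield the same two-line conclusion.
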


\begin{proof}
Take $h\in PH$. Then
\[
\|h\|^2\geq \|Ph+Qh\|^2=(h+Qh\,|\,h+Qh)=\|h\|^2+3\|Qh\|^2, 
\]
and $\|Qh\|=0$. So $Qh=0$ for all $h\in PH$, and $QH$ is orthogonal to $PH$.
\end{proof}

\begin{cor}\label{oldTCKok}
Suppose that $\{P_v:v\in E^0\}$ are mutually orthogonal projections and $\{S_e:e\in E^1\}$ are partial isometries satisfying  \eqref{TCKcond}.
Then the projections $\{S_eS_e^*:e\in E^1\}$ are mutually orthogonal.
\end{cor}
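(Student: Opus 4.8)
The plan is to fix two distinct edges $e,f\in E^1$ and show that the range projections $S_eS_e^*$ and $S_fS_f^*$ are orthogonal, i.e.\ that $S_eS_e^*S_fS_f^*=0$. Since any $C^*$-algebra admits a faithful representation on a Hilbert space, I may as well assume throughout that the $P_v$ and $S_e$ act on a Hilbert space $H$; then orthogonality of two projections is equivalent to orthogonality of their ranges, which is the form in which Lemma~\ref{critperp} is phrased. The key preliminary observation is that applying \eqref{TCKcond} to the singleton $F=\{e\}$ gives $0\le S_eS_e^*\le P_{r(e)}$ for every $e\in E^1$; in particular each $S_eS_e^*$ has norm at most $1$ and has range contained in $P_{r(e)}H$.

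Next I would split into two cases according to whether $e$ and $f$ have the same range. If $r(e)\ne r(f)$, then the ranges of $S_eS_e^*$ and $S_fS_f^*$ lie inside the subspaces $P_{r(e)}H$ and $P_{r(f)}H$, which are orthogonal because the $P_v$ are mutually orthogonal projections; hence $S_eS_e^*$ and $S_fS_f^*$ are orthogonal. If instead $r(e)=r(f)=:v$, then $F=\{e,f\}$ is a finite subset of $vE^1$, so \eqref{TCKcond} yields $S_eS_e^*+S_fS_f^*\le P_v$ and therefore $\|S_eS_e^*+S_fS_f^*\|\le\|P_v\|\le 1$; Lemma~\ref{critperp}, applied with $P=S_eS_e^*$ and $Q=S_fS_f^*$, then shows that these projections have orthogonal ranges, as required. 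Combining the two cases gives the claim for all pairs of distinct edges.

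I do not anticipate any genuine obstacle: the argument is a short case analysis built on Lemma~\ref{critperp}. The only points that need a moment's care are the reduction to a concrete representation so that Lemma~\ref{critperp} applies verbatim, and the elementary fact that $0\le T\le P$ with $P$ a projection forces $\|T\|\le 1$, which is exactly what brings the case $r(e)=r(f)$ under the hypothesis of that lemma.
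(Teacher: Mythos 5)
Your proposal is correct and follows essentially the same route as the paper: the same case split on whether $r(e)=r(f)$, using mutual orthogonality of the $P_v$ in one case and Lemma~\ref{critperp} together with the norm bound $\|S_eS_e^*+S_fS_f^*\|\le\|P_v\|\le 1$ in the other. The remark about passing to a faithful Hilbert-space representation is a harmless bookkeeping point that the paper leaves implicit.
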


\begin{proof}
Suppose first that $r(e)=r(f)=v$, say. Then \eqref{TCKcond} with $F=\{e,f\}$ shows that $P_v\geq S_eS_e^*+S_fS_f^*$. Since $T\geq S\geq 0$ implies $\|T\|\geq \|S\|$, we have $1=\|P_v\|\geq \|S_eS_e^*+S_fS_f^*\|$, and Lemma~\ref{critperp} implies that $S_eS_e^*$ and $S_fS_f^*$ are mutually orthogonal. On the other hand, if $r(e)\not=r(f)$, then applying \eqref{TCKcond} to singleton sets gives $S_eS_e^*\leq P_{r(e)}$ and $S_fS_f^*\leq P_{r(f)}$, and since $P_{r(e)}$ and $P_{r(f)}$ are mutually orthogonal, so are $S_eS_e^*$ and $S_fS_f^*$.
\end{proof}

The Toeplitz algebra $\T C^*(E)$ is generated by a universal   Toeplitz-Cuntz-Krieger family $(p,s)$. The existence of $\T C^*(E)$ was established in \cite[Theorem~4.1]{FR}, which says that the Toeplitz algebra $\T(X)$ of the associated graph correspondence $X$ has the required property. Corollary~\ref{oldTCKok} implies that $s_e^*s_f=\delta_{e,f}p_{s(e)}$, and then the usual argument for graph algebras (as in \cite[Corollary~1.15]{R}, for example) gives the product formula
\begin{equation}\label{prodform}
(s_\mu s_\nu^*)(s_\alpha s_\beta^*)=\begin{cases}
s_{\mu\alpha'}s_\beta^* & \text{if $\alpha=\nu\alpha'$}\\
s_\mu s_{\beta\nu'}^* &\text{if $\nu=\alpha\nu'$}\\
0&\text{otherwise.}\end{cases}
\end{equation}
From this formula, further standard arguments give
\[
\T C^*(E)=\clsp\{s_\mu s_\nu^*:\mu,\nu\in E^*,\ s(\mu)=s(\nu)\}.
\]
The Toeplitz algebra $\T C^*(E)$ carries a gauge action $\gamma$ of $\TT$ satisfying $\gamma_z(s_\mu s_\nu^*)=z^{|\mu|-|\nu|}s_\mu s_\nu^*$, and a dynamics $\alpha:\RR\to \Aut \T C^*(E)$ which is lifted from $\gamma$ via the map $t\mapsto e^{it}$. Since the quotient map $q$ of $\T C^*(E)$ onto $C^*(E)$ is gauge-invariant, we write $\alpha$ also for the corresponding action on the graph algebra $C^*(E)$. 

For every $\mu,\nu\in E^*$, the function $t\mapsto \alpha_t(s_\mu s_\nu^*)=e^{it(|\mu|-|\nu|)}s_\mu s_\nu^*$ on $\RR$ extends to an analytic function on all of $\CC$. Since these elements span a dense subspace of $\T C^*(E)$, it follows from \cite[Proposition~8.12.3]{P} that a state $\phi$ of $\T C^*(E)$ is a KMS$_\beta$ state of $(\T C^*(E),\alpha)$ for some $\beta\in \RR\setminus\{0\}$ if and only if
\begin{equation}\label{KMScond}
\phi((s_\mu s_\nu^*)(s_\sigma s_\tau^*))=\phi((s_\sigma s_\tau^*)\alpha_{i\beta}(s_\mu s_\nu^*))=e^{-\beta(|\mu|-|\nu|)}\phi((s_\sigma s_\tau^*)(s_\mu s_\nu^*))
\end{equation}
for all $\mu, \nu, \sigma, \tau\in E^*$. The KMS$_\beta$ states of $(C^*(E),\alpha)$ come from the KMS$_\beta$ states of $(\T C^*(E),\alpha)$ which factor through $q$.

As in \cite{P}, our KMS$_0$ states are the \emph{invariant} traces (as opposed to \emph{all} the traces, as in \cite{BR}). As in \cite{CM}, we distinguish between KMS$_\infty$ states, which are weak* limits of sequences of KMS$_{\beta_n}$ states as $\beta_n\to\infty$, and ground states, which are states $\phi$ such that the functions $\phi_{a,b}:z\mapsto \phi(a\gamma_z(b))$ are bounded on the upper-half plane for $a,b\in \{s_\mu s_\nu^*:\mu,\nu\in E^*\}$. In the older literature, such as \cite{BR} or \cite{P}, the KMS$_\infty$ states are defined to be the ground states. For general dynamical systems, every KMS$_\infty$ state is a ground state (by \cite[Theorem 5.3.23]{BR}), but not every ground state need be a KMS$_\infty$ state (as happens in \cite{LR} and \cite{LRR}, for example.)

\section{Characterising KMS states}\label{sec:id}

\begin{prop}\label{idKMSbeta}
Let $E$ be a finite directed graph, and let $A\in M_{E^0}(\NN)$ be the vertex matrix with entries
$A(v,w)=|vE^1w|$. Let $\gamma:\TT\to\Aut \T C^*(E)$ be the gauge action, and define
$\alpha:\RR\to\Aut\T C^*(E)$ by $\alpha_t=\gamma_{e^{it}}$. Let $\beta\in \RR$.
\begin{enumerate}
\item\label{prea}
A state $\phi$ of $\T C^*(E)$ is a KMS$_\beta$ state of $(\T C^*(E),\alpha)$ if and only if
\begin{equation}\label{charKMSbeta}
\phi(s_\mu s_\nu^*)=\delta_{\mu,\nu}e^{-\beta|\mu|}\phi(p_{s(\mu)})\quad\text{for all $\mu,\nu\in E^*$.}
\end{equation}
\item\label{charground} A state $\phi$ of $\T C^*(E)$ is a ground state of $(\T C^*(E),\alpha)$ if and only if
\begin{equation}\label{eqcharground}
\phi(s_\mu s_\nu^*)=0\quad\text{ whenever $|\mu|>0$ or $|\nu|>0$.}
\end{equation}
\item\label{a} Suppose that $\phi$ is a KMS${}_\beta$ state of $(\T C^*(E), \alpha)$, and
    define $m^\phi=(m^\phi_v)\in [0,\infty)^{E^0}$ by $m^\phi_v=\phi(p_v)$. Then $m^\phi$ is a
    probability measure on $E^0$ satisfying the \emph{subinvariance relation} $Am^\phi\leq
    e^\beta m^\phi$.
\item\label{whenCK} A KMS${}_\beta$ state $\phi$ of $(\T C^*(E), \alpha)$ factors through
    $C^*(E)$ if and only if $(Am^\phi)_v = e^\beta m^\phi_v$ whenever $v$ is not a source.
\end{enumerate}
\end{prop}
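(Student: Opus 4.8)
The plan is to describe $\ker q$ for the quotient map $q\colon\TCE\to C^*(E)$ explicitly and then evaluate $\phi$ on it using part~\textup{(\ref{prea})}. Since $E$ is finite, $C^*(E)$ is the quotient of $\TCE$ obtained by imposing the Cuntz--Krieger relation $p_v=\sum_{e\in vE^1}s_es_e^*$ at each vertex $v$ that is not a source, so $J:=\ker q$ is the ideal generated by the projections $g_v:=p_v-\sum_{e\in vE^1}s_es_e^*$ with $vE^1\neq\emptyset$. First I would recall the standard description of this ideal coming from the product formula~\eqref{prodform} (as in \cite{R}):
\[
J=\clsp\{\,s_\mu g_v s_\nu^*:v\in E^0\text{ not a source},\ \mu,\nu\in E^*v\,\}.
\]
Because $\phi$ is continuous and a state vanishing on a closed two-sided ideal descends to a state of the quotient, $\phi$ factors through $q$ if and only if $\phi(s_\mu g_v s_\nu^*)=0$ for every non-source $v$ and all $\mu,\nu\in E^*v$ (so $s(\mu)=s(\nu)=v$).

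The second step is a direct computation of $\phi(s_\mu g_v s_\nu^*)$. Expanding $g_v$ and using $s_\mu p_v=s_\mu$, $s_\mu s_e=s_{\mu e}$ and $s_\nu s_e=s_{\nu e}$ (valid because $s(\mu)=s(\nu)=v=r(e)$ for $e\in vE^1$), and then applying \eqref{charKMSbeta} to each resulting term $s_\mu s_\nu^*$ and $s_{\mu e}s_{\nu e}^*$, one obtains
\[
\phi(s_\mu g_v s_\nu^*)=\delta_{\mu,\nu}\,e^{-\beta|\mu|}\Big(m^\phi_v-e^{-\beta}\sum_{e\in vE^1}m^\phi_{s(e)}\Big)=\delta_{\mu,\nu}\,e^{-\beta(|\mu|+1)}\big(e^\beta m^\phi_v-(Am^\phi)_v\big),
\]
where the last equality uses $\sum_{e\in vE^1}m^\phi_{s(e)}=\sum_{w\in E^0}A(v,w)m^\phi_w=(Am^\phi)_v$. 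Hence, for a fixed non-source $v$, the numbers $\phi(s_\mu g_v s_\nu^*)$ with $\mu,\nu\in E^*v$ all vanish precisely when $(Am^\phi)_v=e^\beta m^\phi_v$ (already the case $\mu=\nu=v$, which gives $\phi(g_v)=m^\phi_v-e^{-\beta}(Am^\phi)_v$, detects this, and the general formula then shows the rest vanish as well).

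Combining the two steps gives the claim: $\phi$ factors through $q$ if and only if $(Am^\phi)_v=e^\beta m^\phi_v$ for every $v$ that is not a source. The only step that is not pure bookkeeping is the identification of $J$ with the displayed closed span; this is the usual description of the kernel of the canonical surjection of a Toeplitz--Cuntz--Krieger algebra onto its Cuntz--Krieger quotient, and for a finite graph it follows directly from~\eqref{prodform} by checking that the right-hand side is a two-sided ideal which contains each $g_v$ and is contained in $J$. I would expect this, rather than the computation (which is immediate from part~\textup{(\ref{prea})}), to be the one point needing care; note that part~\textup{(\ref{a})} already supplies $(Am^\phi)_v\le e^\beta m^\phi_v$, so the real content is that equality is forced at the non-sources.
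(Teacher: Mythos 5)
Your argument addresses only part~(\ref{whenCK}) of the proposition, taking parts~(\ref{prea}) and (\ref{a}) as given. Relative to the full statement this is a genuine gap: the bulk of the paper's proof is precisely parts~(\ref{prea})--(\ref{a}). In particular, the ``if'' direction of (\ref{prea}) requires verifying the KMS condition \eqref{KMScond} on arbitrary pairs of spanning elements $s_\mu s_\nu^*$, $s_\sigma s_\tau^*$ via the product formula \eqref{prodform} (and, for $\beta=0$, checking gauge-invariance separately), the ``only if'' direction needs invariance of $\phi$ to kill the terms with $|\mu|\neq|\nu|$; part~(\ref{charground}) is a separate analytic-continuation estimate giving \eqref{eqcharground}; and part~(\ref{a}) is the subinvariance computation that your own final remark invokes. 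None of this appears in your proposal, so as a proof of the proposition it is incomplete.

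For part~(\ref{whenCK}) itself, your argument is correct and takes a route genuinely different from the paper's. The paper only shows that $\phi$ vanishes on the generating projections $p_v-\sum_{e\in vE^1}s_es_e^*$ and then uses Lemma~\ref{checkprojOK} (a KMS-state argument: $\phi$ vanishes on $pAp$, and analyticity lets one move $ap$ around the trace) to promote this to vanishing on the whole ideal --- the point being that a state is not a homomorphism, so vanishing on generators is not automatically enough. You instead sidestep that lemma by exhibiting the kernel explicitly as $\clsp\{s_\mu g_v s_\nu^*: s(\mu)=s(\nu)=v,\ v\text{ not a source}\}$ (the key verification being $s_e^*g_v=0$ for $e\in vE^1$, which makes the span an ideal) and then evaluating $\phi$ on every spanning element directly from \eqref{charKMSbeta}, obtaining $\phi(s_\mu g_v s_\nu^*)=\delta_{\mu,\nu}e^{-\beta(|\mu|+1)}\big(e^\beta m^\phi_v-(Am^\phi)_v\big)$; a state vanishing on a closed ideal does descend to the quotient, so this settles the equivalence. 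What your route buys is concreteness and independence from the abstract lemma; what the paper's route buys is that Lemma~\ref{checkprojOK} needs no description of the ideal at all and is reusable in other KMS computations. If you fill in parts~(\ref{prea})--(\ref{a}), your treatment of (\ref{whenCK}) stands as a valid alternative.
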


For part~\eqref{whenCK}, we need a lemma. Recall that the graph algebra $C^*(E)$ is the quotient of
$\T C^*(E)$ by the ideal $J$ generated by
\begin{equation}\label{defP}
P := \Big\{p_v-\sum_{f\in vE^1}s_fs_f^*:v\in E^0\text{ and }vE^1 \not= \emptyset\Big\}.
\end{equation}
We need to know that a state $\phi$ factors through $C^*(E)$ if and only if it vanishes on the
generators of $J$ (which is not obvious because $\phi$ is not a homomorphism). We have adapted the
following lemma from \cite[Lemma~10.3]{LR}, and have tried to phrase the new version so that it might be useful in other computations of KMS states. Notice that the sets $P$ defined in \eqref{defP} and
$\mathcal{F}:=\{s_\mu s_\nu^*\}$ in $\T C^*(E)$ have the properties required in
Lemma~\ref{checkprojOK}.

\begin{lem}\label{checkprojOK}
Suppose $(A,\RR,\alpha)$ is a dynamical system, and $J$ is an ideal in $A$ generated by a set $P$
of projections which are fixed by $\alpha$. Suppose that there is a family $\mathcal{F}$ of
analytic elements such that $\lsp\mathcal{F}$ is dense in $A$, and such that for each $a\in
\mathcal{F}$, there is a scalar-valued analytic function $f_a$ satisfying $\alpha_z(a)=f_a(z)a$. If
$\phi$ is a KMS$_{\beta}$ state of $(A,\alpha)$ and $\phi(p)=0$ for all $p\in P$, then $\phi$
factors through a state of $A/J$.
\end{lem}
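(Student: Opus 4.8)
The plan is to show that $\phi$ vanishes on the ideal $J$ generated by $P$, so that it descends to the quotient $A/J$; since $\phi$ is positive, it suffices to prove that $\phi(p a p)=0$ for every $p\in P$ and every $a$ in a dense subset of $A$, and then, using that $p$ is a projection together with the Cauchy--Schwarz inequality, to upgrade this to $\phi$ vanishing on all of $J$. So the core of the argument is: for $p\in P$ and $a\in\F$, show $\phi(a^*p a)=0$ (equivalently $\phi(pap)=0$, after polarising).

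First I would fix $p\in P$ and $a=s_\mu s_\nu^*\in\F$, and use the hypothesis that $p$ is $\alpha$-fixed together with the KMS condition to compute $\phi(pap)$. Writing $\alpha_z(a)=f_a(z)a$ and using that $\alpha_{i\beta}(p)=p$, the KMS relation \eqref{KMScond} gives
\begin{equation*}
\phi\big(p(ap)\big)=\phi\big((ap)\alpha_{i\beta}(p)\big)=\phi\big((ap)p\big)=\phi(pap),
\end{equation*}
which on its own is only a tautology; the point is to instead move $p$ past $a$ using a second application of the KMS condition, this time with the analytic factor $f_a$ appearing. Concretely, $\phi(pap) = \phi(p\cdot ap)$; applying the KMS condition to the pair $(p, ap)$ and then to the pair $(ap, p)$, and using $\alpha_z(p)=p$ throughout, one gets a relation of the form $\phi(pap)=f_a(i\beta)\,\phi(a p)$ where the factor $f_a(i\beta)$ is $e^{-\beta(|\mu|-|\nu|)}$ in our application; simultaneously, expanding $\phi(pap)$ the other way gives $\phi(pap)=\phi(p\cdot pa)=\phi(pa)$ (using $p^2=p$ and pulling $p$ through via KMS again). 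Comparing, $\phi(pa)(1-f_a(i\beta))=0$, but this still is not quite enough unless $f_a(i\beta)\neq 1$, which fails precisely when $|\mu|=|\nu|$.

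So the honest route, following \cite[Lemma~10.3]{LR}, is different: I would not try to show $\phi$ kills $J$ term by term, but rather show directly that the GNS representation of $\phi$ annihilates $J$. Let $(\pi_\phi, H_\phi, \xi_\phi)$ be the GNS triple. Since $\phi(p)=0$ and $p\geq 0$, we have $\pi_\phi(p)\xi_\phi=0$ for every $p\in P$. The KMS condition implies that the modular automorphism group $\sigma^\phi$ on the weak closure $\pi_\phi(A)''$ agrees with the (extension of the) dynamics $\alpha$; since each $p\in P$ is fixed by $\alpha$, $\pi_\phi(p)$ lies in the centralizer of $\phi$, equivalently $\pi_\phi(p)$ commutes with the modular operator $\Delta_\phi$, hence $\pi_\phi(p)$ commutes with $J_\phi$ as well, or more directly $\pi_\phi(p)$ commutes with the projection onto $\overline{\pi_\phi(A)'\xi_\phi}$. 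The key consequence is that $\xi_\phi$ is separating for the centralizer, and an element of the centralizer killing $\xi_\phi$ must be $0$: that gives $\pi_\phi(p)=0$ for each $p\in P$. Then $\pi_\phi(J)=0$ since $J$ is the ideal generated by $P$ and $\pi_\phi$ is a homomorphism, so $\phi=(\cdot\,\xi_\phi\mid\xi_\phi)\circ\pi_\phi$ factors through $A/J$.

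The main obstacle is the step asserting that an element $x$ of the centralizer of $\phi$ with $\pi_\phi(x)\xi_\phi=0$ must already satisfy $\pi_\phi(x)=0$. This is where the analytic hypothesis on $\F$ does real work: density of $\lsp\F$ lets one reduce to testing against vectors $\pi_\phi(a)\xi_\phi$ with $a\in\F$, and for such $a$ the function $z\mapsto\phi(\alpha_z(a)^* x)$ (or $z\mapsto \phi(x\,\alpha_z(a)a^*)$, arranged suitably) is analytic and, via the KMS boundary condition together with $\pi_\phi(p)\xi_\phi=0$ for $p=x$, is forced to vanish on a line and hence identically --- an argument of precisely the flavour used in \cite[Lemma~10.3]{LR}. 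I would carry this out by writing, for $p\in P$ and $a,b\in\F$,
\begin{equation*}
\big(\pi_\phi(p)\pi_\phi(a)\xi_\phi \mid \pi_\phi(b)\xi_\phi\big)=\phi(b^*pa),
\end{equation*}
and showing $\phi(b^*pa)=0$ by interposing the analytic functions $f_a, f_b$ and the fixedness of $p$ into two applications of \eqref{KMScond} so that the term $\phi(p\cdot(\text{something}))$ appears with a $p$ adjacent to $\xi_\phi$; since $\phi(p\,c)=(\pi_\phi(c)\xi_\phi\mid\pi_\phi(p)\xi_\phi)=0$ for all $c$, every such term vanishes, giving $\phi(b^*pa)=0$ for all $a,b\in\F$ and hence $\pi_\phi(p)=0$ by density. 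Everything after that is formal.
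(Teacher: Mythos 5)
Your final plan is correct, and it reaches the same conclusion as the paper by a mildly different packaging. The paper stays entirely at the level of the state: positivity and $\phi(p)=0$ give $\phi|_{pAp}=0$, and then, since $p$ is $\alpha$-fixed, $ap$ is analytic with $\alpha_z(ap)=f_a(z)ap$, so one application of the KMS condition gives $\phi(apb)=\phi\big((ap)(pb)\big)=f_a(i\beta)\phi(pbap)=0$ for $a,b\in\mathcal{F}$, and linearity/continuity finish the job. You instead pass to the GNS representation and prove $\pi_\phi(p)=0$, after which $\pi_\phi(J)=0$ is automatic; but the computation that makes this work --- using the KMS condition together with the fixedness of $p$ and the scalar analytic functions $f_a,f_b$ to cycle a word containing $p$ until $p$ sits where $\phi$ visibly kills it --- is exactly the paper's move, just landing $p$ at an end of the word (where Cauchy--Schwarz with $\phi(p)=0$ applies) rather than inside $pAp$. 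Indeed a single application suffices: $\phi(b^*pa)=\overline{f_b(-i\beta)}\,\phi(pab^*)=0$. Your route buys the slightly cleaner conclusion $\pi_\phi(J)=0$ and avoids the paper's final approximation step; the paper's version is more elementary, needing no GNS or modular language. Two caveats on your write-up, neither fatal since you discard them: the opening claim that proving $\phi(pap)=0$ on a dense set and then using Cauchy--Schwarz would already kill $J$ is not valid (Cauchy--Schwarz only controls words with $p$ at an end, and $\phi(pAp)=0$ alone does not force $\phi(apb)=0$ --- the KMS condition is essential, as your own later argument shows); and the modular-theoretic aside is loose --- the relevant standard fact is that $\xi_\phi$ is separating for all of $\pi_\phi(A)''$, not just for the centralizer --- but the hands-on argument you actually propose does not rely on it.
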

\begin{proof}
It suffices to prove that $\phi(apb)=0$ for all $a,b\in A$ and $p\in P$. Let $p\in P$. We have
\[
0\leq \phi(paa^*p)\leq \phi(p\|a\|^2p)=\|a\|^2\phi(p)=0,
\]
and hence $\phi$ vanishes on $pAp$. Now fix $a,b\in\mathcal{F}$. Since
$\alpha_{t}(ap)=\alpha_t(a)p$ for $t\in \RR$, the element $ap$ is analytic with
$\alpha_z(ap)=\alpha_z(a)p=f_a(z)ap$.  Thus the KMS$_{\beta}$ condition gives
\[
\phi(apb)=\phi((ap)(pb))=\phi(pb\alpha_{i\beta}(ap))=f_a(i\beta)\phi(pbap)=0,
\]
and this extends to arbitrary $a$ and $b$ in $A$ by linearity and continuity of $\phi$. Now we use the linearity and continuity of $\phi$ again to see that $\phi$ vanishes on $J$.
\end{proof}

\begin{proof}[Proof of Proposition~\ref{idKMSbeta}]
\eqref{prea} Suppose that $\phi$ is a KMS$_\beta$ state. If $\beta\not=0$, then \cite[Proposition~8.12.4]{P} implies that $\phi$ is invariant for $\alpha$ and $\gamma$. This is also true for $\beta=0$ by our convention that the KMS$_0$ states are invariant traces. For $|\mu|\not=|\nu|$, invariance gives
\[
\phi(s_\mu s_\nu^*)=\int_{\TT}\phi(\gamma_z(s_\mu s_\nu^*))\,dz=\Big(\int_{\TT}z^{|\mu|-|\nu|}\,dz\Big) \phi(s_\mu s_\nu^*)=0.
\]
For $|\mu|=|\nu|$ the product formula \eqref{prodform} gives $s_\nu^*s_\mu=\delta_{\nu,\mu}p_{s(\mu)}$, and the KMS condition gives
\[
\phi(s_\mu s_\nu^*)=\phi(s_\nu^*\alpha_{i\beta}(s_\mu))=e^{-\beta|\mu|}\phi(s_\nu^*s_\mu)=\delta_{\mu,\nu}e^{-\beta|\mu|}\phi(p_{s(\mu)}),
\]
so $\phi$ satisfies \eqref{charKMSbeta}.

Next suppose that $\phi$ satisfies \eqref{charKMSbeta}. To see that $\phi$ is a KMS$_\beta$ state, it suffices to check the KMS condition \eqref{KMScond}. (For $\beta=0$, we need also to observe that any state $\phi$ satisfying \eqref{charKMSbeta} is automatically invariant for the gauge action --- indeed, $\phi(s_\mu s_\nu^*)\not=0$ implies $\mu=\nu$, and then $\gamma_z(s_\mu s_\nu^*)=s_\mu s_\nu^*$.) So we consider a pair of spanning elements $s_\mu s^*_\nu$ and $s_\sigma s^*_\tau$ in $\T C^*(E)$. Computations using the product formula \eqref{prodform} give
\begin{align*}
\phi(s_\mu s^*_\nu s_\sigma s^*_\tau)
&=\begin{cases}\phi(s_{\mu\sigma'} s^*_\tau) &\text{if $\sigma = \nu\sigma'$} \\
\phi(s_\mu s^*_{\tau\nu'}) &\text{if $\nu = \sigma\nu'$} \\
0 &\text{otherwise}
\end{cases} \\
&=\begin{cases}
e^{-\beta|\tau|} \phi(p_{s(\tau)}) &\text{if $\sigma = \nu\sigma'$ and $\tau = \mu\sigma'$} \\
e^{-\beta|\mu|} \phi(p_{s(\mu)}) &\text{if $\nu = \sigma\nu'$ and $\mu = \tau\nu'$} \\
0 &\text{otherwise.}
\end{cases}
\end{align*}
Similarly,
\[
\phi(s_\sigma s^*_\tau s_\mu s^*_\nu)
= \begin{cases}
e^{-\beta|\nu|} \phi(p_{s(\nu)}) &\text{if $\mu = \tau\mu'$ and $\nu = \sigma\mu'$} \\
e^{-\beta|\sigma|} \phi(p_{s(\sigma)})&\text{if $\tau = \mu\tau'$ and $\sigma = \nu\tau'$} \\
0 &\text{otherwise,}
\end{cases}
\]
and so
\[
\phi(s_\sigma s^*_\tau \alpha_{i\beta}(s_\mu s^*_\nu))
= \begin{cases}
e^{-\beta(|\mu| - |\nu|)} e^{-\beta|\nu|} \phi(p_{s(\nu)}) &\text{if $\mu = \tau\mu'$ and $\nu = \sigma\mu'$} \\
e^{-\beta(|\mu| - |\nu|)} e^{-\beta|\sigma|} \phi(p_{s(\sigma)}) &\text{if $\tau = \mu\tau'$ and $\sigma = \nu\tau'$} \\
0 &\text{otherwise.}
    \end{cases}
\]
If  $\mu = \tau\mu'$ and $\nu = \sigma\mu'$, then $s(\mu)=s(\nu)$ and
\[
\phi(s_\mu s^*_\nu s_\sigma s^*_\tau) =e^{-\beta|\mu|}\phi(p_{s(\mu)})=e^{-\beta|\mu|}\phi(p_{s(\nu)})=\phi(s_\sigma s^*_\tau
\alpha_{i\beta}(s_\mu s^*_\nu)).
\]
If $\tau =
\mu\tau'$ and $\sigma = \nu\tau'$, then $s(\tau)=s(\sigma)$ and $|\mu| - |\nu| = |\mu\tau'| - |\nu\tau'| =
|\tau| - |\sigma|$, so
\[
\phi(s_\mu s^*_\nu s_\sigma s^*_\tau) = e^{-\beta|\tau|}\phi(p_{s(\tau)})=e^{-\beta(|\mu|-|\nu|+|\sigma|)}\phi(p_{s(\sigma)})=e^{-\beta(|\mu|-|\nu|)}\phi(s_\sigma s^*_\tau s_\mu s^*_\nu).
\]
Otherwise both
$\phi(s_\mu s^*_\nu s_\sigma s^*_\tau)$ and $\phi(s_\sigma s^*_\tau \alpha_{i\beta}(s_\mu
s^*_\nu))$ are $0$. Thus $\phi$ satisfies \eqref{KMScond}, and is a $\KMS_\beta$-state.

\eqref{charground} For every state $\phi$ and all $\mu$, $\nu$ we have
\[
|\phi(s_\mu \alpha_{a + ib}(s_\nu^*))|= |e^{-i(a + ib)|\nu|}\phi(s_\mu s^*_\nu)|
= e^{b|\nu|}|\phi(s_\mu s^*_\nu)|.
\]
If $\phi$ is a ground state and $\phi(s_\mu s^*_\nu)\not=0$, this is bounded for $b>0$, and hence $|\nu|=0$; since $\phi(s_\nu s^*_\mu) = \overline{\phi(s_\mu s^*_\nu)}$, symmetry implies that $|\mu| = 0$ also. On the other hand, if $\phi$ satisfies \eqref{eqcharground}, then $|\phi(s_\mu \alpha_{a + ib}(s_\nu^*))|$ is constant, and $\phi$ is a ground state.

\eqref{a} Each $m_v^\phi$ is non-negative because $\phi$ is a positive functional. To see that
$m^\phi$ is a probability measure on $E^0$, note that $\sum_{v\in E^0} p_v$ is the identity of $\T
C^*(E)$, and hence
\[
1=\phi(1)=\sum_{v\in E^0}\phi(p_v)=\sum_{v\in E^0}m_v^\phi.
\]

Suppose $v \in E^0$ is not a source. We have $\phi(p_v) \ge \sum_{f \in vE^1} \phi(s_f s^*_f)$, and
\begin{equation}\label{eq-factorthrough}
\begin{split}
\sum_{f\in vE^1}\phi(s_fs_f^*)
    &=\sum_{f\in vE^1}e^{-\beta}\phi(p_{s(f)}) =\sum_{f\in vE^1}e^{-\beta}m_{s(f)}^\phi\\
    &=e^{-\beta}\sum_{w\in E^0} A(v,w)m_w^\phi =e^{-\beta}(Am^\phi)_v.
\end{split}
\end{equation}
Hence $(Am^\phi)_v \le e^{\beta}\phi(p_v) = e^{\beta} m^\phi_v$.

Now suppose $v\in E^0$ is a source. Then $A(v,w)=0$ for all $w\in E^0$, and
\[
(Am^\phi)_v=\sum_{w\in E^0} A(v,w)m_w^\phi=0\leq e^{\beta}m_v^\phi.
\]
Thus $(Am^\phi)_v\leq e^\beta m_v^\phi$ for all $v$.

\eqref{whenCK} Choose $v \in E^0$ and suppose that $v$ is not a source. By Lemma~\ref{checkprojOK}
it suffices to check that $\phi\big(p_v - \sum_{f\in vE^1} s_f s^*_f\big)=0$ if and only if
$(Am^\phi)_v = e^\beta m^\phi_v$. For this we use \eqref{charKMSbeta} and~\eqref{eq-factorthrough}
to see that
\[
e^\beta \phi\Big(p_v - \sum_{f \in vE^1} s_f s^*_f\Big)
    = e^\beta \Big(\phi(p_v) - \sum_{f \in vE^1} e^{-\beta}\phi(p_{s(f)})\Big)
    = e^\beta m^\phi_v - (Am^\phi)_v.\qedhere
\]
\end{proof}

\section{KMS states at large inverse temperatures}\label{sec:Toe}

In this section we study the KMS$_\beta$ states of $(\T C^*(E),\alpha)$ for $\beta > \ln\rho(A)$. The import of
this condition is that the series $\sum_{n=0}^\infty e^{-\beta n}A^n$ converges in the operator
norm with sum $(I-e^{-\beta A})^{-1}$ (by, for example, \cite[\S VII.3.1]{DS}). We use this
observation several times in the proof of Theorem~\ref{mainthmk=1}.

\begin{thm}\label{mainthmk=1}
Let $E$ be a finite directed graph with vertex matrix $A\in M_{E^0}(\NN)$.  Let $\gamma:\TT\to\Aut \T C^*(E)$ be the gauge action and define $\alpha:\RR\to\Aut\T C^*(E)$ by $\alpha_t=\gamma_{e^{it}}$. Assume that $\beta>\ln\rho(A)$.
\begin{enumerate}
\item\label{b} For $v\in E^0$, the series $\sum_{\mu\in E^*v}e^{-\beta|\mu|}$ either converges or is finite, with sum $y_v\geq 1$. Set $y:=(y_v)\in
    [1,\infty)^{E^0}$, and consider $\epsilon\in [0,\infty)^{E^0}$. Then
    $m:=(I-e^{-\beta}A)^{-1}\epsilon$ is a probability measure on $E^0$ if and only if
    $\epsilon\cdot y=1$.
\item\label{c} Suppose $\epsilon\in [0,\infty)^{E^0}$ satisfies $\epsilon\cdot y=1$, and set $m:=(I-e^{-\beta}A)^{-1}\epsilon$. Then there is a KMS$_\beta$ state $\phi_\epsilon$ of $(\T C^*(E), \alpha)$ satisfying
\begin{equation}\label{charKMSep}
\phi_\epsilon(s_\mu s_\nu^*)=\delta_{\mu,\nu}e^{-\beta|\mu|}m_{s(\mu)}.
\end{equation}
\item\label{doh} The map $\epsilon\mapsto\phi_\epsilon$ is an affine isomorphism of
\[
\Sigma_\beta:=\{\epsilon\in [0,\infty)^{E^0}:\epsilon\cdot y=1\}
\]
onto the simplex of KMS${}_\beta$ states of $(\T C^*(E), \alpha)$. The inverse of this isomorphism takes a KMS$_\beta$ state $\phi$ to $(I-e^{-\beta}A)m^\phi$.
\end{enumerate}
\end{thm}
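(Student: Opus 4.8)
The plan is to work through the three parts in order, using Proposition~\ref{idKMSbeta} as the main bridge between states and vectors. For part~\eqref{b}, I would first observe that each finite partial sum $\sum_{|\mu|\le N,\ \mu\in E^*v} e^{-\beta|\mu|}$ equals $\big(\sum_{n=0}^N e^{-\beta n}A^n\mathbf{1}\big)_v$ where $\mathbf{1}$ is the all-ones vector; since $\beta>\ln\rho(A)$, the Neumann series $\sum_n e^{-\beta n}A^n$ converges in norm to $(I-e^{-\beta}A)^{-1}$, so the sums converge with $y=(I-e^{-\beta}A)^{-1}\mathbf{1}$, and $y_v\ge 1$ because the $n=0$ term contributes $1$. (If the graph is finite and $v$ emits no infinite path, the sum is just a finite sum; either way the formula $y=(I-e^{-\beta}A)^{-1}\mathbf{1}$ holds.) Then for $\epsilon\in[0,\infty)^{E^0}$ and $m=(I-e^{-\beta}A)^{-1}\epsilon$, all entries of $m$ are nonnegative (the inverse has nonnegative entries, being a convergent sum of nonnegative matrices), and $\sum_v m_v = \mathbf{1}^{\mathsf T} m = \mathbf{1}^{\mathsf T}(I-e^{-\beta}A)^{-1}\epsilon = \big((I-e^{-\beta}A^{\mathsf T})^{-1}\mathbf{1}\big)^{\mathsf T}\epsilon$. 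I need to be slightly careful: $y$ as defined uses $E^*v$, i.e.\ paths \emph{ending} at $v$, so the relevant matrix is $A^{\mathsf T}$ in the path-counting, and one checks $\mathbf{1}^{\mathsf T}(I-e^{-\beta}A)^{-1} = y^{\mathsf T}$; hence $\sum_v m_v = y\cdot\epsilon$, and $m$ is a probability measure iff $\epsilon\cdot y=1$.

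For part~\eqref{c}, given such an $\epsilon$, set $m=(I-e^{-\beta}A)^{-1}\epsilon$, which by~\eqref{b} is a probability measure on $E^0$. I want a state $\phi_\epsilon$ satisfying~\eqref{charKMSep}. The natural approach is to build a concrete representation: on the Hilbert space $\ell^2(E^*)$ with orthonormal basis $\{\xi_\mu\}$, define a Toeplitz–Cuntz–Krieger family by $S_e\xi_\mu = \xi_{e\mu}$ if $s(e)=r(\mu)$ and $0$ otherwise, $P_v\xi_\mu=\xi_\mu$ if $r(\mu)=v$ and $0$ otherwise, inducing a representation $\pi$ of $\TCE$, and then define $\phi_\epsilon = \sum_{\mu\in E^*}$ (weighted rank-one vector states) — concretely $\phi_\epsilon(a) = \sum_{\mu\in E^*} e^{-\beta|\mu|}\epsilon_{s(\mu)}\,(\pi(a)\xi_\mu\mid\xi_\mu)$, after checking this converges and gives a state. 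Evaluating on $s_\mu s_\nu^*$: $\pi(s_\mu s_\nu^*)\xi_\lambda = \xi_{\mu\lambda'}$ when $\lambda=\nu\lambda'$ and $0$ otherwise, so $(\pi(s_\mu s_\nu^*)\xi_\lambda\mid\xi_\lambda)$ is nonzero only when $\mu=\nu$ and $r(\lambda)=s(\mu)$, giving $\phi_\epsilon(s_\mu s_\nu^*) = \delta_{\mu,\nu}\sum_{\lambda\in E^*s(\mu)} e^{-\beta(|\mu|+|\lambda|)}\epsilon_{s(\lambda)} = \delta_{\mu,\nu}e^{-\beta|\mu|}\big((I-e^{-\beta}A)^{-1}\epsilon\big)_{s(\mu)} = \delta_{\mu,\nu}e^{-\beta|\mu|}m_{s(\mu)}$, exactly~\eqref{charKMSep}; one needs the identity $\sum_{\lambda\in E^*w} e^{-\beta|\lambda|}\epsilon_{s(\lambda)} = m_w$, which follows by the same path-counting as in~\eqref{b}. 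Normalization $\phi_\epsilon(1)=\sum_\mu e^{-\beta|\mu|}\epsilon_{s(\mu)} = \sum_w m_w = 1$. Finally, since $\phi_\epsilon$ satisfies~\eqref{charKMSbeta} with $\phi_\epsilon(p_v)=m_v$, Proposition~\ref{idKMSbeta}\eqref{prea} shows it is a KMS$_\beta$ state. (Alternatively, one can avoid the explicit representation and instead define $\phi_\epsilon$ abstractly on the spanning set by~\eqref{charKMSep}, extend by the GNS-type construction, and verify positivity; but the concrete model makes positivity automatic, so that is the route I would take.)

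For part~\eqref{doh}, affinity and injectivity of $\epsilon\mapsto\phi_\epsilon$ are immediate from~\eqref{charKMSep}: the map $\epsilon\mapsto m$ is linear and invertible, and $\phi_\epsilon$ is determined by $m$ via~\eqref{charKMSbeta}, while $m$ is recovered from $\phi_\epsilon$ by $m_v=\phi_\epsilon(p_v)$, so $\epsilon = (I-e^{-\beta}A)m^{\phi_\epsilon}$. Surjectivity is where the real content lies: given an arbitrary KMS$_\beta$ state $\phi$, I must show it arises as some $\phi_\epsilon$. By Proposition~\ref{idKMSbeta}\eqref{prea}, $\phi$ is determined by $m^\phi=(\phi(p_v))_v$, which by~\eqref{a} is a probability measure satisfying the subinvariance relation $Am^\phi\le e^\beta m^\phi$. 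Set $\epsilon := (I-e^{-\beta}A)m^\phi$; subinvariance says precisely $\epsilon\ge 0$, and $\epsilon\cdot y = \sum_v m^\phi_v = 1$ by the computation in~\eqref{b} run backwards (since $y^{\mathsf T}(I-e^{-\beta}A) = \mathbf{1}^{\mathsf T}$, giving $y\cdot\epsilon = \mathbf{1}^{\mathsf T} m^\phi = 1$). Thus $\epsilon\in\Sigma_\beta$, and $(I-e^{-\beta}A)^{-1}\epsilon = m^\phi$, so $\phi_\epsilon$ and $\phi$ have the same values on all $p_v$, hence agree on the spanning set by~\eqref{charKMSbeta}, hence $\phi=\phi_\epsilon$. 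The main obstacle is thus making sure the subinvariance relation of part~\eqref{a} translates cleanly into $\epsilon\ge 0$ and that the pairing identity $y^{\mathsf T}(I-e^{-\beta}A)=\mathbf 1^{\mathsf T}$ is used consistently (keeping track of $A$ versus $A^{\mathsf T}$ in the definition of $y$); everything else is bookkeeping with~\eqref{prodform} and the Neumann series.
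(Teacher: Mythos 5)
Your proposal is correct and takes essentially the same route as the paper's proof: the Neumann series $\sum_{n\ge 0}e^{-\beta n}A^n=(I-e^{-\beta}A)^{-1}$ together with the identity $y^{\mathsf T}=\mathbf{1}^{\mathsf T}(I-e^{-\beta}A)^{-1}$ for the first part, the weighted vector-state functional $a\mapsto\sum_{\mu\in E^*}e^{-\beta|\mu|}\epsilon_{s(\mu)}(\pi(a)\xi_\mu\,|\,\xi_\mu)$ on the path-space representation on $\ell^2(E^*)$ for the second, and for the third the subinvariance of $m^\phi$ from Proposition~\ref{idKMSbeta} with $\epsilon:=(I-e^{-\beta}A)m^\phi$, exactly as in the paper. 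The only things to tidy are notational: in the paper's convention $E^*v=\{\mu:s(\mu)=v\}$ and $vE^*=\{\mu:r(\mu)=v\}$, so the tail sum in your evaluation of $\phi_\epsilon(s_\mu s_\mu^*)$ runs over $\lambda'\in s(\mu)E^*$ (paths with range $s(\mu)$), giving $m_w=\sum_{\lambda\in wE^*}e^{-\beta|\lambda|}\epsilon_{s(\lambda)}$, not a sum over $E^*w$ -- a slip consistent with the transpose correction you already make in the first part, and not affecting the substance.
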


\begin{rmk}
Because $y_v\geq 1$, the extreme points of $\Sigma_\beta$ are the vectors
$\epsilon^u=(\epsilon^u_v)=(\delta_{u,v}y_u^{-1})$. Thus $\Sigma_\beta$ is a simplex of dimension
$|E^0|-1$, as predicted by \cite{EL} (see \S\ref{sec:EL}).
\end{rmk}

\begin{proof}[Proof of Theorem~\ref{mainthmk=1}\,\eqref{b}]
Let $v\in E^0$. Since $A^n(w,v)$ is the number of paths of length $n$ from $v$ to $w$, we have
\begin{equation}\label{calcy}
\sum_{\mu\in E^*v}e^{-\beta|\mu|}=\sum_{n=0}^\infty\sum_{\mu\in E^nv}e^{-\beta n}=\sum_{n=0}^\infty\sum_{w\in E^0}e^{-\beta n}|wE^nv|=\sum_{n=0}^\infty \sum_{w\in E^0} e^{-\beta n}A^n(w,v).
\end{equation}
(The sums in \eqref{calcy} are finite if $E^nv$ is empty for large $n$.) Since $\beta>\ln\rho(A)$, the series $\sum_{n=0}^\infty e^{-\beta n}A^n$ converges in the operator norm. Thus for every fixed $w\in E^0$ the series $\sum_{n=0}^\infty e^{-\beta n}A^n(w,v)$ converges, and hence the last sum in \eqref{calcy} converges. The sum is at least $1$ because all the terms are non-negative and $e^{-\beta 0}A^0(v,v)=1$.

The expansion $m=\sum_{n=0}^\infty e^{-\beta n}A^n\epsilon$ shows that $m\geq 0$, and we use the same expansion to compute
\begin{align}
m(E^0)&=\sum_{v\in E^0}m_v=\sum_{v\in E^0}((I-e^{-\beta}A)^{-1}\epsilon)_v\label{eq-m}\\
&=\sum_{v\in E^0}\Big(\Big(\sum_{n=0}^\infty e^{-\beta n}A^n\Big)\epsilon\Big)_v
=\sum_{v\in E^0}\sum_{n=0}^\infty \sum_{w\in E^0}e^{-\beta n}A^n(v,w)\epsilon_w\notag\\
&=\sum_{w\in E^0}\epsilon_w\Big(\sum_{v\in E^0}\sum_{n=0}^\infty e^{-\beta n}|vE^n w|\Big)
=\sum_{w\in E^0}\epsilon_w\Big( \sum_{\mu\in E^*w} e^{-\beta|\mu|} \Big)\notag\\
&=\epsilon\cdot y.\notag\qedhere
\end{align}
\end{proof}

\begin{proof}[Proof of Theorem~\ref{mainthmk=1}\,\eqref{c}]
We build our KMS${}_\beta$ states by representing $\T C^*(E)$ on $\ell^2(E^*)$. We write $h_\mu$ for the point mass at $\mu\in E^*$, and let $\{Q_v:v\in E^0\}$ and $\{T_e:e\in E^1\}$ be the partial isometries such that
\[Q_v h_\mu=\begin{cases} h_{\mu}&\text{if $v=r(\mu)$}\\
0&\text{otherwise} \end{cases}
\quad\text{and}\quad T_e h_\mu=\begin{cases} h_{e\mu}&\text{if $s(e)=r(\mu)$}\\
0&\text{otherwise.} \end{cases}
\]
Then $(Q, T)$ is a Toeplitz-Cuntz-Krieger family in $B(\ell^2(E^*))$, and there is a representation $\pi_{Q,T}:\T C^*(E)\to B(\ell^2(E^*))$ such that $\pi_{Q,T}(p_v)=Q_v$ and $\pi_{Q,T}(s_e)=T_e$ (in fact, it follows from \cite[Theorem~4.1]{FR} that $\pi_{Q,T}$ is faithful). For $\mu\in E^*$ we set
\begin{equation}\label{defDelta}
\Delta_\mu:=e^{-\beta|\mu|}\epsilon_{s(\mu)},
\end{equation}
and note that $\Delta_\mu\geq 0$.  We aim to define $\phi_\epsilon$ by
\begin{equation}\label{claimstate}
\phi_\epsilon(a)=\sum_{\mu\in E^*}\Delta_\mu(\pi_{Q,T}(a)h_\mu\,|\, h_\mu)\quad\text{for $a\in \T C^*(E)$.}
\end{equation}

To see that \eqref{claimstate} defines a state, we need to show that $\sum_{\mu\in E^*}\Delta_\mu=1$. For $v\in E^0$ we have
\begin{align}\label{calcsum}
\sum_{\mu \in v E^*} \Delta_\mu
&=\sum^\infty_{n=0} \sum_{\mu \in v E^n} e^{-\beta n}\epsilon_{s(\mu)}=\sum^\infty_{n=0} e^{-\beta n}\Big(\sum_{w\in E^0}\sum_{\mu\in vE^nw}\epsilon_{w}\Big)\\
&=\sum^\infty_{n=0} e^{-\beta n}\Big(\sum_{w\in E^0}A^n(v,w)\epsilon_{w}\Big)= \sum^\infty_{n=0} (e^{-\beta n}A^n\epsilon)_{v},\notag
\end{align}
which converges with sum $m_v=\big((1-e^{-\beta}A)^{-1}\epsilon\big)_v$ because $\beta>\ln\rho(A)$. We saw in part~\eqref{b} that $m$ is a probability measure, so $\sum_{\mu\in E^*}\Delta_\mu=\sum_{v\in E^0}m_v=1$. This implies, first, that the series in \eqref{claimstate} converges for all $a$, and hence defines a positive functional on $\T C^*(E)$, and, second, that $\phi_\epsilon(1)=1$, so that $\phi_\epsilon$ is a state.

To prove \eqref{charKMSep}, let $\lambda \in E^*$. Then
\[
(\pi_{Q,T}(s_\mu s^*_\nu) h_\lambda \, |\,  h_\lambda) = (T_\nu^* h_\lambda \, |\,  T^*_\mu h_\lambda)= \begin{cases}
1 &\text{ if $\lambda = \mu\lambda' = \nu\lambda'$} \\
0 &\text{ otherwise.}
\end{cases}
\]
Since $\mu\lambda' = \nu\lambda'$ forces $\mu = \nu$, we have $\phi_\epsilon(s_\mu s^*_\nu) = 0$ if
$\mu \not= \nu$. So suppose $\mu = \nu$. Then since~\eqref{calcsum} gives $\sum_{\mu \in vE^*}
\Delta_\mu = m_v$, we have
\begin{align*}
\phi_\epsilon(s_\mu s^*_\mu)
&= \sum_{\lambda \in E^*} \Delta_\lambda (T^*_\mu h_\lambda\, |\, T_\mu^*  h_\lambda)
= \sum_{\lambda=\mu\lambda'} e^{-\beta|\mu\lambda'|}\epsilon_{s(\lambda')}\\
&=e^{-\beta|\mu|}\sum_{\lambda'\in s(\mu)E^*}\Delta_{\lambda'}=e^{-\beta|\mu|}m_{s(\mu)}.
\end{align*}
Thus $\phi_\epsilon$ satisfies \eqref{charKMSep}. Since $\phi(p_v)=m_v$, $\phi_\epsilon$ satisfies \eqref{charKMSbeta}, and Proposition~\ref{idKMSbeta} implies that $\phi_\epsilon$ is a KMS$_\beta$ state.
\end{proof}

\begin{proof}[Proof of Theorem~\ref{mainthmk=1}\,\eqref{doh}]
To see that every KMS$_\beta$ state $\phi$ has the form $\phi_\epsilon$, apply Proposition~\ref{idKMSbeta}\,\eqref{a} to see that $m^\phi=(\phi(p_v))$ is a subinvariant probability measure, and take $\epsilon:=(I-e^{-\beta}A)m^\phi$. Then $m:=(I-e^{-\beta}A)^{-1}\epsilon=m^\phi$, and comparing \eqref{charKMSbeta} with \eqref{charKMSep} shows that $\phi=\phi_\epsilon$.

The formula \eqref{charKMSep} also shows that the map $F:\epsilon\mapsto \phi_\epsilon$ is injective, and that $F$ is weak* continuous from $\Sigma_\beta\subset \RR^{E^0}$ to the state space of $\T C^*(E)$. Thus $F$ is a homeomorphism of the compact space $\Sigma_\beta$ onto the simplex of KMS$_\beta$ states. The formulas \eqref{defDelta} and \eqref{claimstate} show that $F$ is affine, and the formula for the inverse follows from the proof of surjectivity.
\end{proof}

\begin{rmk}
In the proof of Theorem~\ref{mainthmk=1}\,\eqref{doh}, we observed that a  probability measure $m$ on $E^0$ is subinvariant exactly when $\epsilon:=(I-e^{-\beta}A)m$ is nonnegative. Thus parts \eqref{b} and \eqref{c} of Theorem~\ref{mainthmk=1} give a converse to Theorem~\ref{idKMSbeta}\,\eqref{a}: the map $\phi\mapsto m^\phi$ defined by $m^\phi_v=\phi(p_v)$ is an isomorphism of the simplex of KMS$_\beta$ states onto the simplex of subinvariant probability measures on $E^0$.
\end{rmk}

\begin{rmk}\label{rho=0}
Whether the sum in \eqref{calcy} is finite or infinite depends on the existence of cycles in $E$. If the index set $E^*v$ is infinite for some $v$, then there is a cycle in $E$, Lemma~\ref{valuesrho} implies that $\rho(A)\geq 1$, and hence Theorem~\ref{mainthmk=1} applies only to $\beta>\ln \rho(A)\geq 0$. The index set $E^*v$ is finite for all $v$ if and only if there are no cycles in $E$, in which case Lemma~\ref{valuesrho} implies that $\rho(A)=0$, and Theorem~\ref{mainthmk=1} applies to any real $\beta$. Thus when $E$ has no cycles, there is a $(|E^0|-1)$-dimensional simplex of KMS$_\beta$ states for all $\beta\in \RR$.
\end{rmk}

\section{KMS states at the critical inverse temperature}\label{sec:CK}

The critical inverse temperature is $\beta=\ln\rho(A)$. As in the previous section, we can prove the existence of KMS$_{\ln\rho(A)}$ states for arbitrary graphs.

\begin{prop}\label{lemcriticalgenA}
Suppose that $E$ is a finite directed graph with vertex matrix $A$. If $m$ is a probability measure on $E^0$ such that $Am\leq \rho(A)m$, then there is a KMS$_{\ln\rho(A)}$ state $\phi$ on $(\T C^*(E),\alpha)$ such that
\begin{equation}\label{limphin2}
\phi(s_\mu s^*_\nu) = \delta_{\mu,\nu} \rho(A)^{-|\mu|} m_{s(\mu)}.
\end{equation}
The state $\phi$ factors through a state of $C^*(E)$ if and only if $(Am)_v=\rho(A)m_v$ for every vertex $v$ which is not a source.
\end{prop}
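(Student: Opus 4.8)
The plan is to produce $\phi$ as the limit, along inverse temperatures decreasing to the critical value $\beta_c:=\ln\rho(A)$, of the KMS states constructed in Theorem~\ref{mainthmk=1}. (We may assume $\rho(A)>0$, as otherwise $E$ has no cycle and $\ln\rho(A)$ is not a real number.) First I would fix a sequence $\beta_n$ with $\beta_n>\beta_c$ and $\beta_n\downarrow\beta_c$, and set $\epsilon^{(n)}:=(I-e^{-\beta_n}A)m$. The subinvariance hypothesis $Am\leq\rho(A)m$, together with $e^{-\beta_n}\rho(A)=e^{\beta_c-\beta_n}<1$, gives $\epsilon^{(n)}_v\geq(1-e^{\beta_c-\beta_n})m_v\geq0$, so $\epsilon^{(n)}\in[0,\infty)^{E^0}$. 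Since $\beta_n>\ln\rho(A)$ the matrix $I-e^{-\beta_n}A$ is invertible with $(I-e^{-\beta_n}A)^{-1}\epsilon^{(n)}=m$, a probability measure; so Theorem~\ref{mainthmk=1}\,\eqref{b} forces $\epsilon^{(n)}\cdot y=1$ for the vector $y$ attached to $\beta_n$. Hence Theorem~\ref{mainthmk=1}\,\eqref{c} applies and yields a KMS$_{\beta_n}$ state $\phi_n:=\phi_{\epsilon^{(n)}}$ with $\phi_n(s_\mu s_\nu^*)=\delta_{\mu,\nu}e^{-\beta_n|\mu|}m_{s(\mu)}$.

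Next I would pass to the limit. For each fixed pair $\mu,\nu$ the scalars $\phi_n(s_\mu s_\nu^*)$ converge to $\delta_{\mu,\nu}\rho(A)^{-|\mu|}m_{s(\mu)}$ as $n\to\infty$; since the elements $s_\mu s_\nu^*$ span a dense subspace of $\T C^*(E)$ and each $\phi_n$ has norm one, a routine argument shows that $\phi(a):=\lim_n\phi_n(a)$ exists for all $a\in\T C^*(E)$ and defines a state (positivity and $\phi(1)=1$ pass to the limit). By construction $\phi$ satisfies~\eqref{limphin2}, and in particular $\phi(p_v)=m_v$; so~\eqref{limphin2} is exactly the identity~\eqref{charKMSbeta} at $\beta=\beta_c$, and Proposition~\ref{idKMSbeta}\,\eqref{prea} shows that $\phi$ is a KMS$_{\ln\rho(A)}$ state.

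For the factorisation statement I would invoke Lemma~\ref{checkprojOK}, applied to the ideal $J$ with $\T C^*(E)/J=C^*(E)$, whose generating set $P$ of~\eqref{defP} consists of projections fixed by $\alpha$, and to the family $\mathcal{F}=\{s_\mu s_\nu^*\}$. Together with the trivial converse (each element of $P$ maps to $0$ in $C^*(E)$), the lemma shows that $\phi$ factors through $C^*(E)$ precisely when $\phi$ vanishes on every $p_v-\sum_{f\in vE^1}s_fs_f^*$ with $vE^1\neq\emptyset$. Using~\eqref{limphin2} and $\sum_{f\in vE^1}m_{s(f)}=\sum_{w}A(v,w)m_w=(Am)_v$ one computes
\[
\rho(A)\,\phi\Big(p_v-\sum_{f\in vE^1}s_fs_f^*\Big)=\rho(A)\,m_v-(Am)_v,
\]
which vanishes if and only if $(Am)_v=\rho(A)m_v$. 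Since the vertices occurring in $P$ are exactly the non-sources, this is the asserted criterion.

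The one step that does not reduce directly to earlier results is the construction of $\phi$: at the critical value $\sum_{n}e^{-\beta_c n}A^n$ need not converge, so the $\ell^2(E^*)$-construction of Theorem~\ref{mainthmk=1}\,\eqref{c} is unavailable — indeed, if $m$ is genuinely $A$-invariant then $(I-\rho(A)^{-1}A)m=0$, and the naive weights $e^{-\beta_c|\mu|}\big((I-\rho(A)^{-1}A)m\big)_{s(\mu)}$ vanish identically. Everything therefore hinges on approximating from above; the only points demanding care are checking that the $\epsilon^{(n)}$ are admissible inputs for Theorem~\ref{mainthmk=1} (nonnegativity from subinvariance, normalisation from part~\eqref{b}) and that the pointwise limit of the $\phi_n$ is again a state.
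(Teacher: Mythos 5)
Your proposal is correct and follows essentially the same route as the paper: approximate $\ln\rho(A)$ from above by $\beta_n$, feed $\epsilon_n=(I-e^{-\beta_n}A)m$ (nonnegative by subinvariance, normalised by Theorem~\ref{mainthmk=1}\,\eqref{b}) into Theorem~\ref{mainthmk=1}\,\eqref{c}, take a limit state, and identify it via Proposition~\ref{idKMSbeta}. The only cosmetic differences are that the paper extracts a weak* convergent subsequence by compactness where you argue pointwise convergence of the whole sequence from uniform boundedness, and that the paper cites Proposition~\ref{idKMSbeta}\,\eqref{whenCK} for the factorisation criterion where you re-run that short computation via Lemma~\ref{checkprojOK}.
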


\begin{proof}
Choose a sequence $\{\beta_n\}\subset (\ln\rho(A),\infty)$ such that $\beta_n\to\ln\rho(A)$. For
each $n$, the vector $m$ is a probability measure satisfying $Am\leq\rho(A)m \le e^{\beta_n}m$.
Hence $\epsilon_n:=(I-e^{-\beta_n}A)m \in [0,\infty)^{E^0}$ and so the vector $y$ of
Theorem~\ref{mainthmk=1}\,\eqref{b} with $\beta = \beta_n$ satisfies $\epsilon_n \cdot y =
1$. Applying Theorem~\ref{mainthmk=1}\,\eqref{c} gives a KMS$_{\beta_n}$ state $\phi_n$ satisfying
\begin{equation}\label{phin}
\phi_n(s_\mu s^*_\nu) = \delta_{\mu,\nu} e^{-\beta_n|\mu|}m_{s(\mu)}.
\end{equation}
Since the state space of $\T C^*(E)$ is weak* compact we may assume by passing to a subsequence
that the sequence $\{\phi_{n}\}$ converges to a state $\phi$. Letting $n\to\infty$ in \eqref{phin}
shows that $\phi$ satisfies~\eqref{limphin2}. Proposition~\ref{idKMSbeta}\,\eqref{prea} implies
that $\phi$ is a KMS${}_{\ln\rho(A)}$ state. (Or we could apply the general result about limits of
KMS states in \cite[Proposition~5.3.23]{BR}.) The last assertion follows from
Proposition~\ref{idKMSbeta}\,\eqref{whenCK}.
\end{proof}

\begin{cor}\label{corcriticalA}
Suppose that $E$ is a finite directed graph with vertex matrix $A$. Then $(\T C^*(E),\alpha)$ has a KMS$_{\ln(\rho(A)}$ state.
\end{cor}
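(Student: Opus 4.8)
The plan is to reduce the statement to Proposition~\ref{lemcriticalgenA}: it suffices to produce a probability measure $m$ on $E^0$ satisfying the subinvariance relation $Am\le\rho(A)m$, and then that proposition supplies a KMS$_{\ln\rho(A)}$ state of $(\T C^*(E),\alpha)$ directly. (I take for granted here that $\rho(A)>0$, equivalently that $E$ contains a cycle, so that $\ln\rho(A)$ is a real number; for a graph with no cycle one has $\rho(A)=0$ and this formulation has no content.)

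Such an $m$ will come from the Perron--Frobenius theorem for nonnegative matrices: since $A\in M_{E^0}(\NN)$ has nonnegative entries, $\rho(A)$ is an eigenvalue of $A$ admitting a nonnegative eigenvector, which after normalisation is a probability measure $m$ with $Am=\rho(A)m$, and in particular $Am\le\rho(A)m$. The one point that deserves a word, because $A$ need not be irreducible, is the existence of a nonnegative eigenvector, and I would handle it by the standard perturbation argument: for $\delta>0$ let $A_\delta$ be the strictly positive matrix with entries $A_\delta(v,w)=A(v,w)+\delta$, so the classical Perron--Frobenius theorem for strictly positive matrices provides a strictly positive probability eigenvector $m_\delta$ with $A_\delta m_\delta=\rho(A_\delta)m_\delta$; as $\delta\downarrow0$ we have $\rho(A_\delta)\to\rho(A)$ by continuity of the spectral radius in the entries, and since the simplex of probability measures on the finite set $E^0$ is compact we may pass to a convergent subsequence $m_{\delta_k}\to m$, whereupon taking limits in $A_{\delta_k}m_{\delta_k}=\rho(A_{\delta_k})m_{\delta_k}$ gives $Am=\rho(A)m$ with $m$ still a probability measure, hence nonzero.

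With $m$ in hand, Proposition~\ref{lemcriticalgenA} immediately yields a KMS$_{\ln\rho(A)}$ state, which is the assertion. I do not anticipate any real obstacle: the operator-algebraic content is entirely packaged in Proposition~\ref{lemcriticalgenA}, and what remains is a standard fact of linear algebra. As an alternative that stays inside this section's machinery, one could instead observe that for every $\beta>\ln\rho(A)$ the set $\Sigma_\beta$ of Theorem~\ref{mainthmk=1}\,\eqref{doh} is nonempty --- it contains each extreme point $\epsilon^u=(\delta_{u,v}y_u^{-1})$ --- so Theorem~\ref{mainthmk=1}\,\eqref{c} gives a KMS$_\beta$ state; choosing $\beta_n\downarrow\ln\rho(A)$, passing to a weak* convergent subsequence of the corresponding states, and applying Proposition~\ref{idKMSbeta}\,\eqref{prea} then produces a KMS$_{\ln\rho(A)}$ state.
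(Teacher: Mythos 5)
Your proposal is correct, but your main route differs from the paper's at the key step of producing the measure. The paper does not invoke the Perron--Frobenius theorem for general (possibly reducible) nonnegative matrices: it chooses $\beta_n\downarrow\ln\rho(A)$, notes that Theorem~\ref{mainthmk=1}\,\eqref{b} already supplies, for each $n$, a subinvariant probability measure $m^n$ (namely $(I-e^{-\beta_n}A)^{-1}\epsilon$ for any $\epsilon\in\Sigma_{\beta_n}$, which satisfies $Am^n\leq e^{\beta_n}m^n$), and passes to a pointwise convergent subsequence to get a probability measure $m$ with $Am\leq\rho(A)m$; Proposition~\ref{lemcriticalgenA} then finishes, exactly as in your reduction. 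You instead manufacture an honest eigenvector $Am=\rho(A)m$ by perturbing $A$ to the strictly positive matrices $A_\delta=A+\delta$ and taking a limit of their Perron eigenvectors; this is a correct and standard argument (continuity of the spectral radius plus compactness of the probability simplex), and it even buys a little more: since your $m$ is invariant rather than merely subinvariant, the last sentence of Proposition~\ref{lemcriticalgenA} shows the resulting state factors through $C^*(E)$, which the paper's $m$ need not give. What the paper's route buys is economy: it stays entirely inside the machinery already built in Section~\ref{sec:Toe} and needs no matrix theory beyond the irreducible case cited elsewhere. Your alternative sketch (take KMS$_{\beta_n}$ states from Theorem~\ref{mainthmk=1}\,\eqref{c}, pass to a weak* limit, and identify it via Proposition~\ref{idKMSbeta}\,\eqref{prea}) is essentially the paper's argument with the proof of Proposition~\ref{lemcriticalgenA} unfolded, and it also works, since the characterisation \eqref{charKMSbeta} passes to the limit as $\beta_n\to\ln\rho(A)$. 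Your explicit caveat that $\rho(A)>0$ is needed for $\ln\rho(A)$ to make sense (i.e.\ $E$ must contain a cycle) is a point the paper leaves implicit, and flagging it is reasonable.
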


\begin{proof}
Choose a decreasing sequence $\{\beta_n\}$ with $\beta_n\to \ln\rho(A)$. Then Theorem~\ref{mainthmk=1}\,(a) implies that there are probability measures $m^n$ on $E^0$ such that $Am^n\leq e^{\beta_n}m^n$. By passing to a subsequence, we may suppose that $m^n$ converges pointwise to a probability measure $m$, and then $Am\leq \rho(A)m$. So Proposition~\ref{lemcriticalgenA} gives a KMS$_{\ln\rho(A)}$ state. 
\end{proof}

To get uniqueness at the critical inverse
temperature $\beta=\ln\rho(A)$, we impose some restrictions on $E$. We say that $E$ is \emph{strongly
connected} if $vE^*w$ is nonempty for every $v,w\in E^0$; equivalently, if the vertex matrix $A$ is
irreducible in the sense of \cite[Chapter~1]{Seneta}. Thus \cite[Theorem~1.5]{Seneta} implies that
$\rho(A)$ is an eigenvalue of $A$ for which there is an eigenvector $x=(x_v)$ with $x_v>0$ for all
$v$. The eigenvector $x$ such that $\sum_{v\in E^0} x_v=1$ is the \emph{unimodular
Perron-Frobenius eigenvector} of $A$. The following is our most satisfying result on uniqueness, but later we will improve it by allowing sources (Corollary~\ref{kwcor}). Notice in particular that parts~\eqref{cor-critical-a} and \eqref{cor-critical-b} complete the description of the KMS states on $\T C^*(E)$ when $E$ is strongly connected.

\begin{thm}\label{critical-irreducible}
Let $E$ be a finite directed graph and suppose that $E$ is strongly connected. Let $\gamma:\TT\to\Aut \T C^*(E)$ be the gauge action and define $\alpha:\RR\to\Aut\T C^*(E)$ by $\alpha_t=\gamma_{e^{it}}$. Let $x$ be the unimodular Perron-Frobenius eigenvector of the vertex matrix $A$.
\begin{enumerate}
\item\label{cor-critical-a} The system $(\T C^*(E),\alpha)$ has a unique
    KMS${}_{\ln\rho(A)}$ state $\phi$. This state satisfies
\begin{equation}\label{limphin}
\phi(s_\mu s^*_\nu) = \delta_{\mu,\nu} \rho(A)^{-|\mu|} x_{s(\mu)},
\end{equation}
and factors through a KMS${}_{\ln\rho(A)}$ state $\bar\phi$ of $(C^*(E), \alpha)$.
\item\label{criticalconv} The state $\bar\phi$ is the only KMS state of $(C^*(E),\alpha)$.
\item\label{cor-critical-b} If $\beta<\ln\rho(A)$, then $(\T C^*(E),\alpha)$ has no KMS${}_\beta$ states.
\end{enumerate}
\end{thm}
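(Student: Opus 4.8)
The plan is to obtain existence from Proposition~\ref{lemcriticalgenA} and uniqueness from the Perron--Frobenius theory already invoked before the theorem. For part~\eqref{cor-critical-a} I would start with existence: since $E$ is strongly connected, $A$ is irreducible and its unimodular Perron--Frobenius eigenvector $x$ satisfies $Ax=\rho(A)x$, so in particular $Ax\le\rho(A)x$. Applying Proposition~\ref{lemcriticalgenA} with $m=x$ produces a $\KMS_{\ln\rho(A)}$ state $\phi$ satisfying \eqref{limphin}. A strongly connected graph has no sources (every vertex receives an edge), so $(Ax)_v=\rho(A)x_v$ at \emph{every} vertex, and the second assertion of Proposition~\ref{lemcriticalgenA} shows that $\phi$ factors through $q$, giving a state $\bar\phi$ of $C^*(E)$; this $\bar\phi$ is automatically $\KMS_{\ln\rho(A)}$ for $\alpha$ because $q$ intertwines the dynamics.

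For uniqueness, let $\psi$ be any $\KMS_{\ln\rho(A)}$ state of $(\T C^*(E),\alpha)$. By Proposition~\ref{idKMSbeta}\,\eqref{a} the vector $m^\psi=(\psi(p_v))$ is a probability measure with $Am^\psi\le\rho(A)m^\psi$, and by Proposition~\ref{idKMSbeta}\,\eqref{prea} the values of $\psi$ on the spanning family $\{s_\mu s_\nu^*\}$ are determined by $m^\psi$ via \eqref{charKMSbeta}. So it suffices to show $m^\psi=x$, i.e.\ that an irreducible nonnegative matrix admits no ``subinvariant'' probability measure other than $x$. Applying \cite[Theorem~1.5]{Seneta} to $A^{\mathrm T}$ gives a strictly positive row vector $z$ with $zA=\rho(A)z$; pairing it with the nonnegative vector $\rho(A)m^\psi-Am^\psi$ yields
\[
0\le z\bigl(\rho(A)m^\psi-Am^\psi\bigr)=\rho(A)(z\cdot m^\psi)-(zA)\cdot m^\psi=0 ,
\]
and since $z>0$ entrywise we get $Am^\psi=\rho(A)m^\psi$. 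Irreducibility then forces $m^\psi=cx$ with $c>0$, and normalising gives $c=1$. Hence $\psi$ agrees with $\phi$ on a dense subspace, so $\psi=\phi$.

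Part~\eqref{cor-critical-b} uses the same pairing in reverse: a $\KMS_\beta$ state $\psi$ with $\beta<\ln\rho(A)$ would give, via Proposition~\ref{idKMSbeta}\,\eqref{a}, a probability measure $m^\psi$ with $Am^\psi\le e^\beta m^\psi$, whence $\rho(A)(z\cdot m^\psi)=(zA)\cdot m^\psi\le e^\beta(z\cdot m^\psi)$; dividing by $z\cdot m^\psi>0$ gives $\rho(A)\le e^\beta$, a contradiction. For part~\eqref{criticalconv}, a KMS state of $(C^*(E),\alpha)$ at some inverse temperature $\beta$ pulls back through $q$ to a $\KMS_\beta$ state $\phi$ of $(\T C^*(E),\alpha)$ that factors through $q$. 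Part~\eqref{cor-critical-b} rules out $\beta<\ln\rho(A)$, and if $\beta>\ln\rho(A)$ then Proposition~\ref{idKMSbeta}\,\eqref{whenCK} (again using that $E$ has no sources) gives $Am^\phi=e^\beta m^\phi$, making $m^\phi$ a nonzero nonnegative eigenvector of the irreducible matrix $A$ for the eigenvalue $e^\beta>\rho(A)$ --- impossible by Perron--Frobenius. So $\beta=\ln\rho(A)$, and uniqueness from part~\eqref{cor-critical-a} forces $\phi$ to be the state found there, whence the original state is $\bar\phi$.

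This is essentially bookkeeping on top of Propositions~\ref{idKMSbeta} and~\ref{lemcriticalgenA} (together with Theorem~\ref{mainthmk=1}, which underlies the former), so I do not anticipate a serious obstacle. The single load-bearing ingredient is the Perron--Frobenius fact that a nonnegative (sub)eigenvector of an irreducible matrix must be a multiple of the Perron vector with eigenvalue exactly $\rho(A)$; the only points requiring care are the repeated appeal to ``strongly connected $\Rightarrow$ no sources'' (so that Proposition~\ref{idKMSbeta}\,\eqref{whenCK} applies at every vertex) and, in part~\eqref{criticalconv}, remembering to quantify over all inverse temperatures rather than just the critical one.
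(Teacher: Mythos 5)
Your proof is correct and follows essentially the same route as the paper: reduce everything to the subinvariant probability measure $m^\psi$ via Proposition~\ref{idKMSbeta}, obtain existence from Proposition~\ref{lemcriticalgenA} (the paper cites Corollary~\ref{corcriticalA}, with the formula \eqref{limphin} then forced by uniqueness), and settle uniqueness and the nonexistence for $\beta\neq\ln\rho(A)$ by Perron--Frobenius theory for the irreducible matrix $A$. The only difference is cosmetic: where the paper cites the last assertion of \cite[Theorem~1.6]{Seneta}, you reprove the needed facts by pairing with a strictly positive left eigenvector, which is a valid self-contained substitute.
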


\begin{proof}
\eqref{cor-critical-a} We proved existence of $\phi$ in Corollary~\ref{corcriticalA}, and $\phi$
factors through $C^*(E)$ because $Ax=\rho(A)x$. To
establish uniqueness, suppose that $\psi$ is a KMS$_{\ln\rho(A)}$ state. Then
Proposition~\ref{idKMSbeta}\,\eqref{a} says that $m^\psi=(\psi(p_v))$ is a probability measure
satisfying $Am^\psi\leq \rho(A)m^\psi$. Now the forward implication in the last assertion of
\cite[Theorem~1.6]{Seneta} implies that $m^\psi=x$, and together the formulas \eqref{charKMSbeta}
and \eqref{limphin} imply that $\psi=\phi$.

\eqref{criticalconv} Suppose that $\psi$ is a KMS state of $(C^*(E),\alpha)$,  with inverse temperature $\beta$, say. Then Proposition~\ref{idKMSbeta}\,\eqref{whenCK} implies that $Am^{\psi\circ q}=e^\beta
m^{\psi\circ q}$; since $A$ is irreducible, the backward implication in the last assertion of
\cite[Theorem~1.6]{Seneta} implies that $e^\beta=\rho(A)$. Now the uniqueness in part~\eqref{cor-critical-a} implies that $\psi\circ q=\phi=\bar\phi\circ q$, and $\psi=\bar\phi$.

\eqref{cor-critical-b} Suppose that $\phi$ is a KMS$_\beta$ state of $(\T C^*(E),\alpha)$. Then
Proposition~\ref{idKMSbeta}\,\eqref{a} implies that $m^\phi:=(\phi(p_v))$ satisfies $Am^\phi\leq
e^{\beta}m^\phi$. In other words, $m^\phi$ is subinvariant. Since $m^\phi\geq 0$ pointwise,
\cite[Theorem~1.6]{Seneta} implies that $e^{\beta}\geq \rho(A)$, or equivalently $\beta\geq\ln\rho(A)$.
\end{proof}

\begin{rmk}
The irreducibility of $A$ was crucial in the proof of uniqueness. A similar phenomenon occurs in \cite[Theorem~5.3]{LRR}, where an extra hypothesis (there, that a given integer matrix is a dilation matrix) is needed to ensure uniqueness at the critical inverse temperature.
\end{rmk}

\begin{rmk}\label{rho=02}
A strongly connected graph contains at least one cycle. If it contains more than one, then Lemma~\ref{valuesrho} implies that $\rho(A)>1$, and $C^*(E)$ has no KMS$_0$ states. However, if $E$ consists of a single cycle, then $\rho(A)=1$, and Theorem~\ref{critical-irreducible} implies that $C^*(E)$ has a unique KMS$_0$ state. We check that this is consistent with what we know about cycles. Suppose that $E$ is a cycle with $n$ edges. Then $C^*(E)$ is isomorphic to $C(\TT,M_n(\CC))$, and the gauge action acts transitively on the spectrum $\TT$ (see \cite[Example~2.14]{R}, for example). Thus there is a unique invariant measure, and integrating the usual normalised trace against this measure gives a unique invariant trace on $C(\TT,M_n(\CC))$.
\end{rmk}

\section{Ground states and KMS$_\infty$ states}\label{sec:ground}

\begin{prop}
Let $E$ be a finite directed graph, let $\gamma:\TT\to\Aut \T C^*(E)$ be the gauge action and
define $\alpha:\RR\to\Aut\T C^*(E)$ by $\alpha_t=\gamma_{e^{it}}$. Suppose that $\epsilon$ is a
probability measure on $E^0$. Then there is a KMS${}_\infty$ state $\phi_\epsilon$
satisfying
\begin{equation}\label{defphie}
\phi_\epsilon(s_\mu s_\nu^*)=
\begin{cases}
0&\text{unless $|\mu|=|\nu|=0$ and $\mu=\nu$}\\
\epsilon_v&\text{if $\mu=\nu=v\in E^0$.}
\end{cases}
\end{equation}
Every ground state of $(\T C^*(E),\alpha)$ is a KMS${}_\infty$ state, and the map $\epsilon\mapsto
\phi_\epsilon$ is an affine isomorphism of the simplex of probability measures on $E^0$ onto the
set of ground states of $(\T C^*(E),\alpha)$.
\end{prop}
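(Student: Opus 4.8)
The plan is to prove the three assertions of the proposition in turn: existence of the state $\phi_\epsilon$ satisfying \eqref{defphie}, the fact that every ground state is a KMS$_\infty$ state, and the claim that $\epsilon\mapsto\phi_\epsilon$ is an affine isomorphism onto the set of ground states. For existence, I would mimic the construction in the proof of Theorem~\ref{mainthmk=1}\,\eqref{c}: represent $\TCE$ faithfully on $\ell^2(E^*)$ via the Toeplitz-Cuntz-Krieger family $(Q,T)$, and define
\[
\phi_\epsilon(a)=\sum_{v\in E^0}\epsilon_v\,(\pi_{Q,T}(a)h_v\,|\,h_v),
\]
where now the sum is only over the vertices $v\in E^0\subseteq E^*$ (the paths of length $0$), rather than over all of $E^*$. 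Since $\epsilon$ is a probability measure, $\sum_v\epsilon_v=1$, so $\phi_\epsilon$ is a state. A direct computation with $(\pi_{Q,T}(s_\mu s_\nu^*)h_v\,|\,h_v)=(T_\nu^*h_v\,|\,T_\mu^*h_v)$ shows this inner product is nonzero only when $v=\mu\lambda'=\nu\lambda'$ for some $\lambda'\in E^*$; since $v$ has length $0$ this forces $|\mu|=|\nu|=0$ and $\mu=\nu=v$, giving \eqref{defphie}. Then Proposition~\ref{idKMSbeta}\,\eqref{charground} immediately shows $\phi_\epsilon$ is a ground state, since $\phi_\epsilon(s_\mu s_\nu^*)=0$ whenever $|\mu|>0$ or $|\nu|>0$.

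To see that $\phi_\epsilon$ is actually a KMS$_\infty$ state, I would exhibit it as a weak* limit of KMS$_{\beta_n}$ states for some sequence $\beta_n\to\infty$. The natural candidate: for $\beta$ large, take $\epsilon^{(\beta)}:=(I-e^{-\beta}A)\,m$ where we want $\phi_{\epsilon^{(\beta)}}$ (in the sense of Theorem~\ref{mainthmk=1}) to converge to $\phi_\epsilon$. The cleanest route is instead to start from $\epsilon$ itself: observe that as $\beta\to\infty$ the vector $y=y^{(\beta)}$ of Theorem~\ref{mainthmk=1}\,\eqref{b} converges pointwise to $\mathbf{1}$ (since $\sum_{\mu\in E^*v}e^{-\beta|\mu|}\to A^0(v,v)=1$), so for each fixed probability measure $\epsilon$ one can rescale to $\epsilon/(\epsilon\cdot y^{(\beta)})\in\Sigma_\beta$ and form the corresponding KMS$_{\beta}$ state $\psi_\beta$. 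Using \eqref{charKMSep} together with $m^{(\beta)}=(I-e^{-\beta}A)^{-1}(\epsilon/(\epsilon\cdot y^{(\beta)}))=\sum_n e^{-\beta n}A^n(\epsilon/(\epsilon\cdot y^{(\beta)}))\to\epsilon$ pointwise, one checks $\psi_\beta(s_\mu s_\nu^*)\to\phi_\epsilon(s_\mu s_\nu^*)$ for every $\mu,\nu$, and hence $\psi_{\beta_n}\to\phi_\epsilon$ weak* along any sequence $\beta_n\to\infty$. This gives $\phi_\epsilon\in\mathrm{KMS}_\infty$. Combined with the general fact (\cite[Theorem~5.3.23]{BR}) that every KMS$_\infty$ state is a ground state, and the converse we are about to establish, we get that the ground states and the KMS$_\infty$ states coincide.

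For the final assertion, the map $\epsilon\mapsto\phi_\epsilon$ is affine and weak* continuous by \eqref{defphie}, and injective because $\epsilon_v=\phi_\epsilon(p_v)$ recovers $\epsilon$ from $\phi_\epsilon$. It remains to show surjectivity onto the set of ground states. Given a ground state $\phi$, Proposition~\ref{idKMSbeta}\,\eqref{charground} gives $\phi(s_\mu s_\nu^*)=0$ whenever $|\mu|>0$ or $|\nu|>0$, so $\phi$ is determined on the dense spanning set $\{s_\mu s_\nu^*\}$ by its values $\phi(p_v)$, $v\in E^0$. Setting $\epsilon_v:=\phi(p_v)\geq 0$, positivity of $\phi$ and $\sum_v p_v=1$ force $\sum_v\epsilon_v=1$, so $\epsilon$ is a probability measure, and comparing values on $\{s_\mu s_\nu^*\}$ shows $\phi=\phi_\epsilon$. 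The main thing to be careful about is the KMS$_\infty$ step: one has to verify convergence of the approximating KMS$_{\beta_n}$ states cleanly, including that the rescaling constants $\epsilon\cdot y^{(\beta_n)}$ stay bounded away from $0$ (which is immediate since $y^{(\beta)}\geq\mathbf 1$, so $\epsilon\cdot y^{(\beta)}\geq 1$), and that the pointwise limit $m^{(\beta_n)}\to\epsilon$ is justified term-by-term using $\beta_n>\ln\rho(A)$ eventually. Everything else is routine bookkeeping with the product formula \eqref{prodform}.
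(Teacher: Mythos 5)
Your proposal is correct and follows essentially the same route as the paper: approximate by KMS$_{\beta_n}$ states built from a rescaling of $\epsilon$ into $\Sigma_{\beta_n}$ (the paper rescales coordinatewise by $(y_v^{\beta_n})^{-1}$, you divide globally by $\epsilon\cdot y^{(\beta_n)}$, which is equivalent for this purpose), show $m^{(\beta_n)}\to\epsilon$ so the limit satisfies \eqref{defphie}, and then use Proposition~\ref{idKMSbeta}\,\eqref{charground} both to see these are ground states and to get surjectivity by setting $\epsilon_v=\phi(p_v)$. Your only departure is constructing $\phi_\epsilon$ explicitly as a convex combination of vector states on $\ell^2(E^*)$ and proving genuine convergence on the spanning elements, rather than extracting a subsequential weak* limit as the paper does; this is a harmless (and slightly cleaner) variant.
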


\begin{proof}
Choose a sequence  $\beta_j\to \infty$ as $j\to \infty$ with each $\beta_j>\ln \rho(A)$. For each
$j$, define $(y^j_v)$ as in Theorem~\ref{mainthmk=1}\,\eqref{b} by $y^j_v=\sum_{\alpha\in
E^*v}e^{-\beta_j|\alpha|}$. Set $\epsilon^j_v:=\epsilon_v(y_v^j)^{-1}$, and let $\phi_j$ be the
KMS$_{\beta_j}$ state $\phi_{\epsilon^j}$ of $(\T C^*(E),\alpha)$ described in Theorem~\ref{mainthmk=1}\,\eqref{c}.
Since the state space is weak* compact we may assume that $\phi_j$ converges in the weak* topology
to a state $\phi_\epsilon$.

Set $m^j := (I - e^{-\beta_j}A)^{-1} \epsilon^j$, and take $\mu$, $\nu$ in $E^*$. Then
\[
\phi_j(s_\mu s^*_\nu) = \delta_{\mu,\nu}
e^{-\beta_j|\mu|} m^j_{s(\mu)}.
\]
This is always $0$ if $\mu\not=\nu$, so suppose that $\mu = \nu$. If $|\mu| > 0$, then $e^{-\beta_j
|\mu|} \to 0$, and hence $\phi_{j}(s_\mu s^*_\mu) \to 0$. So suppose that $\mu = \nu = v$ is a
vertex. An application of the dominated convergence theorem shows that $y^j_v\to 1$ as
$j\to\infty$. Hence $\epsilon^j_v =\epsilon_v(y_v^j)^{-1} \to \epsilon_v$. Since $(I -
e^{-\beta_j}A)^{-1} \to I$ in the operator norm, we have $ m^j_v\to\epsilon_v$, and hence
$\phi_j(p_v) \to \epsilon_v$. Since $\phi_j(p_v)\to \phi_{\epsilon}(p_v)$, we deduce that $\phi_\epsilon(p_v) = \epsilon_v$, and $\phi_\epsilon$
satisfies~\eqref{defphie}.

Since $\phi_\epsilon(s_\mu s_\nu^*)=0$ whenever $|\mu|>0$ or $|\nu|>0$, Proposition~\ref{idKMSbeta}\,\eqref{charground} implies that $\phi_\epsilon$ is a
ground state.
Now let $\phi$ be a ground state, and $\epsilon_v:=\phi(p_v)$.  Then $\epsilon$ is a probability measure on $E^0$, and $\phi=\phi_\epsilon$ because $\phi$ is determined by its values on the vertex projections (by Proposition~\ref{idKMSbeta}\,\eqref{charground} again). Thus $\epsilon\mapsto \phi_\epsilon$ maps the simplex of probability measures onto the ground states, and it is clearly affine and injective. Since each $\phi_\epsilon$ is by construction a KMS$_\infty$ state, it follows that every ground state is a KMS$_\infty$ state.
\end{proof}

\section{Connections with the literature}\label{sec:connections}

\subsection{Cuntz-Krieger algebras}\label{sec:EL}
When $E$ has no sources, we could in principle deduce Theorem~\ref{mainthmk=1} from the work of
Exel and Laca on the Cuntz-Krieger algebras of $\{0,1\}$-matrices. To do this, we apply
\cite[Theorem~18.4]{EL} to the edge matrix $B\in M_{E^1}(\NN)$ of $E$ defined by
\[
B(e,f):=\begin{cases}
1&\text{if $s(e)=r(f)$}\\
0&\text{otherwise.}
\end{cases}
\]
We need to assume that $E$ has no sources, because an edge $e$ such that $s(e)$ is a source would
give a row of zeroes in $B$, which is not allowed in \cite{EL}. Edges with the same range give
equal columns of $B$, so provided there are no sinks, the distinct columns of $B$ are in one-to-one
correspondence with the vertex set $E^0$, and the set $\Omega_e$ in \cite[Theorem~18.4]{EL} can be
identified with $E^0$. The condition $\epsilon\cdot y=1$ is phrased in \cite{EL} as
$Z(\beta,\epsilon)=1$, but we think it would be hard to dig our formula for $\phi_\epsilon$ out of
\cite{EL}.

\subsection{Graphs with sources}   Kajiwara and Watatani \cite{KW} have recently considered the KMS states on the graph algebras $C^*(E)$ of arbitrary finite graphs, and have shown in particular that sources give rise to KMS$_\beta$ states \cite[Theorem~4.4]{KW}. 

Recall that a subset $H$ of $E^0$ is \emph{saturated} if $s(vE^1)\subset H\Longrightarrow v\in H$. The \emph{saturation} of a hereditary set $S\subset E^0$ is the smallest saturated set $H$ containing $S$. The saturation is itself hereditary. For a saturated hereditary set $H$, the $C^*$-algebra $C^*(E\setminus H)$ of the graph in \eqref{CKsources} below is a quotient of $C^*(E)$ (see \cite[Theorem~4.9]{R}). 

\begin{cor}\label{kwcor} Let $E$ be a finite directed graph with vertex matrix $A$.  Let $\gamma:\TT\to\Aut C^*(E)$ be the gauge action and define $\alpha:\RR\to\Aut C^*(E)$ by $\alpha_t=\gamma_{e^{it}}$.
\begin{enumerate}
\item\label{noncrit} Assume that $\beta>\ln\rho(A)$, and that $\epsilon$ belongs to the simplex
    $\Sigma_\beta$ of Theorem~\ref{mainthmk=1}. Then $\phi_\epsilon$ factors through a state of
    $C^*(E)$ if and only if $\epsilon_v=0$ whenever $v$ is not a source.
\item\label{CKsources} Let $H$ be the saturation of the set of sources in $E$. Suppose that $E$ has no sinks and that the graph $E\setminus H:=(E^0\setminus H, E^1\setminus s^{-1}(H),r,s)$ is strongly connected. Let $x$ be the unimodular Perron-Frobenius eigenvector for the vertex matrix $A_{E\setminus H}$ of $E\setminus H$. Then there is a unique KMS$_{\ln \rho(A)}$ state $\phi$ of $(C^*(E),\alpha)$, and
\begin{equation}\label{phiwhensources}
\phi(s_\mu s_\nu^*)=\begin{cases}
\delta_{\mu,\nu}\rho(A)^{-|\mu|}x_{s(\mu)}&\text{if $s(\mu)\in E^0\setminus H$}\\
0&\text{if $s(\mu)\in H$.}
\end{cases}
\end{equation}
\end{enumerate}
\end{cor}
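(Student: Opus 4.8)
The plan is to treat the two parts separately, deriving both from Proposition~\ref{idKMSbeta} and the results already in hand. For part~\eqref{noncrit}, the starting point is Proposition~\ref{idKMSbeta}\,\eqref{whenCK}, which says $\phi_\epsilon$ factors through $C^*(E)$ exactly when $(Am^{\phi_\epsilon})_v=e^\beta m^{\phi_\epsilon}_v$ for every non-source $v$. Since $m^{\phi_\epsilon}=(I-e^{-\beta}A)^{-1}\epsilon$ by Theorem~\ref{mainthmk=1}, we have $(I-e^{-\beta}A)m^{\phi_\epsilon}=\epsilon$, i.e.\ $m^{\phi_\epsilon}_v-e^{-\beta}(Am^{\phi_\epsilon})_v=\epsilon_v$, or equivalently $e^\beta m^{\phi_\epsilon}_v-(Am^{\phi_\epsilon})_v=e^\beta\epsilon_v$. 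Thus $(Am^{\phi_\epsilon})_v=e^\beta m^{\phi_\epsilon}_v$ if and only if $\epsilon_v=0$. Running this over all non-sources $v$ gives the claim; I would write this out as a short chain of equalities.

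For part~\eqref{CKsources}, I would first establish existence and the formula, then uniqueness. Existence: I would produce a suitable subinvariant probability measure $m$ on $E^0$ by declaring $m_v=x_v$ for $v\in E^0\setminus H$ and $m_v=0$ for $v\in H$, and check $Am\le\rho(A)m$ with equality at every non-source. The key computation is that for a non-source $v\in E^0\setminus H$, $(Am)_v=\sum_{w\in E^0}A(v,w)m_w=\sum_{w\in E^0\setminus H}A_{E\setminus H}(v,w)x_w=\rho(A_{E\setminus H})x_v$, using that $H$ is hereditary (so edges from $v\notin H$ can still land in $H$, but those $w$ contribute $0$) — wait, one must be careful: hereditariness says $s(vE^1)$-type conditions; what is actually needed is that deleting $H$ does not remove edges between vertices of $E^0\setminus H$, which is exactly the definition of $E\setminus H$. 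For $v\in H$ a non-source, since $H$ is saturated and hereditary one checks $(Am)_v=0=\rho(A)m_v$ using $s(vE^1)\subseteq H$. One also needs $\rho(A)=\rho(A_{E\setminus H})$; this should follow because the nonzero part of the spectrum is governed by the strongly connected piece $E\setminus H$ together with the fact that $H$ contributes a nilpotent block (no cycles through $H$, as $H$ is the saturation of sources and $E\setminus H$ is strongly connected) — I would cite or quickly argue that adding a set with no cycles does not change the spectral radius. Then Proposition~\ref{lemcriticalgenA} gives a KMS$_{\ln\rho(A)}$ state $\phi$ with exactly the formula \eqref{phiwhensources}, and the ``factors through $C^*(E)$'' clause of that proposition applies because $(Am)_v=\rho(A)m_v$ at all non-sources by construction.

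Uniqueness is the main obstacle. Suppose $\psi$ is any KMS$_\beta$ state of $(C^*(E),\alpha)$. By Proposition~\ref{idKMSbeta}\,\eqref{a} and \eqref{whenCK} applied to $\psi\circ q$, the vector $m^{\psi\circ q}$ is a probability measure with $(Am^{\psi\circ q})_v=e^\beta m^{\psi\circ q}_v$ at every non-source. The plan is to show this forces $m^{\psi\circ q}$ to be supported off $H$ and to restrict to a Perron--Frobenius vector of $A_{E\setminus H}$. First, since every $v\in H$ is either a source or has $s(vE^1)\subseteq H$, and since $H$ is the saturation of the sources, one argues inductively along the saturation filtration that $m^{\psi\circ q}_v=0$ for all $v\in H$: at a source $v$, the relation $0=(Am^{\psi\circ q})_v$ gives no information directly, so instead I would use that a source contributes a generator $p_v$ of $J$? — no, sources do not; the correct approach is to note $\psi\circ q$ kills the ideal generated by $\{p_v-\sum s_fs_f^*\}$, and then use that vertices in $H$ eventually (walking backwards) only reach sources, combined with the equality $(Am)_v=e^\beta m_v$ at the non-sources of $H$, to propagate $m_v=0$ down to the sources. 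Once $m^{\psi\circ q}$ is supported on $E^0\setminus H$, its restriction satisfies $A_{E\setminus H}m=e^\beta m$ with $m\ge0$ nonzero, so by irreducibility of $A_{E\setminus H}$ and \cite[Theorem~1.6]{Seneta} we get $e^\beta=\rho(A_{E\setminus H})=\rho(A)$ and $m=x$ on $E^0\setminus H$. Then \eqref{charKMSbeta} pins down $\psi\circ q$ on all of $\F$, hence $\psi\circ q=\phi\circ q$, hence $\psi=\phi$. The delicate point — propagating vanishing of the measure from the non-sources of $H$ all the way to the sources — is where I expect to spend the most care, and it is essentially a finite induction up the saturation of the set of sources.
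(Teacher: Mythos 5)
Your part~(\ref{noncrit}) is correct and is exactly the paper's argument. Your existence argument for part~(\ref{CKsources}) is also valid, and takes a mildly different route from the paper: rather than starting from an abstract critical KMS state on $\T C^*(E)$ (Corollary~\ref{corcriticalA}) and analysing its subinvariance relation, you exhibit the measure $m=(x,0)$ directly and feed it into Proposition~\ref{lemcriticalgenA}. The point you leave dangling, $\rho(A)=\rho(A_{E\setminus H})$, is fine: heredity of $H$ means there are no edges with range in $H$ and source outside $H$, so for a suitable ordering of $E^0$ the matrix $A$ is block upper triangular with diagonal blocks $A_{E\setminus H}$ and $A_H$, and the filtration $S=S_0\subset S_1\subset\cdots\subset S_n=H$ defining the saturation makes $A_H$ strictly upper triangular, hence nilpotent.

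The genuine gap is in uniqueness, at exactly the step you flag: showing that $m:=m^{\psi\circ q}$ vanishes on $H$. The mechanism you propose --- the equalities $(Am)_v=e^\beta m_v$ at the non-sources of $H$, plus ``walking backwards'' to the sources --- cannot do this. Every edge with range in $H$ has its source in $H$ at a strictly lower level of the filtration, so the relation at a non-source $v\in H$ expresses $e^\beta m_v$ as a sum of masses at strictly lower levels; such a relation only transmits vanishing between $v$ and its feeders, so you need a base case, and there is none: at the sources there is no relation at all (which is precisely why mass can sit on sources when $\beta>\ln\rho(A)$, by your part~(\ref{noncrit})), and a vertex of $H$ all of whose outgoing edges leave $H$ appears in no relation internal to $H$ whatsoever. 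What actually kills $m|_H$ is the interaction with the $E\setminus H$ block together with the no-sinks hypothesis, neither of which your sketch uses, and your two steps must be done in the opposite order. From the equalities at the vertices of $E^0\setminus H$ (all of which are non-sources) you get $A_{E\setminus H}m^{E\setminus H}+Bm^H=\rho(A)m^{E\setminus H}$, hence $A_{E\setminus H}m^{E\setminus H}\le\rho(A_{E\setminus H})m^{E\setminus H}$; Seneta's theorem applied in this block \emph{first} upgrades this to equality, and therefore $Bm^H=0$. Since $E$ has no sinks, every vertex of $H$ emits an edge; top-level vertices of the filtration necessarily emit into $E\setminus H$, so $Bm^H=0$ kills their mass, and a finite downward induction (each lower-level vertex emits either into $E\setminus H$, where $Bm^H=0$ applies, or into a higher level of $H$, where $A_Hm^H\le\rho(A)m^H$ and the already-established vanishing apply) gives $m^H=0$; only then do you know $m^{E\setminus H}$ is a probability measure, hence equal to $x$. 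The no-sinks hypothesis is not decorative: for the graph consisting of one vertex with a loop plus one isolated vertex $w$, every hypothesis of part~(\ref{CKsources}) except ``no sinks'' holds, yet the point mass at $w$ gives a second KMS$_0$ state of $C^*(E)$; so an argument that never invokes sinks, like the one you outline, cannot be completed.
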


\begin{proof}
\eqref{noncrit} We set  $m:=(I-e^{-\beta}A)^{-1}\epsilon$.
Proposition~\ref{idKMSbeta}\,\eqref{whenCK} implies that $\phi_\epsilon$ factors through a state of
$C^*(E)$ if and only if $m_v = (e^{-\beta} Am)_v$ whenever $v$ is not a source. Since $\epsilon =
(I - e^{-\beta} A)m$, this is equivalent to $\epsilon_v = 0$ whenever $v$ is not a source.

\eqref{CKsources} Corollary~\ref{corcriticalA} implies that there is a KMS$_{\ln\rho(A)}$ state $\psi$ on $\T C^*(E)$. Then Proposition~\ref{idKMSbeta}\,\eqref{a} says that $(m^\psi_v):=(\psi(p_v))$ gives a probability measure $m^\psi$ satisfying the subinvariance relation $Am^\psi\leq \rho(A)m^\psi$. We will analyse this subinvariance relation by writing $A$ in a particular block form. 

To do this, let
$S$ be the set of sources in $E$. Recall, from \cite[Remark~3.1]{BHRS} for example, that we can
construct the saturation $H$ of the hereditary set $S$ inductively. We set $S_0:=S$, and for $k\geq
0$ set
\[
S_{k+1}=S_k\cup \{v\in E^0: s(vE^1)\subset S_k\}.
\]
Since $E$ is finite, we eventually have $S_{n+1}=S_n$, and then $H=S_n$. We now order $E^0$ by
listing first the vertices in $E^0\setminus H$, then those in $H\setminus S_{n-1}$, then those in
$S_{n-1}\setminus S_{n-2}$ and so on, finishing with the sources $S$. This ordering gives a block
decomposition
\[
A=\begin{pmatrix}A_{E\setminus H}&B\\0&A_H\end{pmatrix}.
\]
Since $w \in S_{k+1}$ and $A(w,v) \not= 0$ imply $v \in S_k$, the matrix $A_H$ is strictly upper
triangular, in the sense that all entries on or below the diagonal are $0$.

We write $m^\psi=(m^{E\setminus H},m^H)$ in block form. Subinvariance says that
\begin{align}
\label{offH} A_{E\setminus H}m^{E\setminus H}+Bm^H&\leq \rho(A)m^{E\setminus H}, \text{ and}\\
\label{onH} A_Hm^H&\leq \rho(A)m^H.
\end{align}
Since $A_H$ is strictly upper triangular, $\rho(A_{E\setminus H})=\rho(A)$. Thus, since $B$ and
$m^H$ are non-negative, \eqref{offH} implies that $A_{E\setminus H}m^{E\setminus H}\leq
\rho(A_{E\setminus H})m^{E\setminus H}$. Now the last assertion in \cite[Theorem~1.6]{Seneta}
implies that $A_{E\setminus H}m^{E\setminus H}=\rho(A_{E\setminus H})m^{E\setminus H}$, and
$m^{E\setminus H}$ is a Perron-Frobenius eigenvector for $A_{E\setminus H}$.

Since $A_{E\setminus H}m^{E\setminus H}=\rho(A_{E\setminus H})m^{E\setminus H}$, \eqref{offH}
implies that $Bm^H=0$. We next claim that $m^H=0$, or equivalently that $m^\psi_v=0$ for every
vertex $v\in H$. First we consider $v\in S_n\setminus S_{n-1}$. Since $E$ has no sinks, there is an
edge $e$ with $s(e)=v$. The range $r(e)$ cannot be in $H$, because otherwise $r(e)\in S_n$ but
$s(e)$ is not in $S_{n-1}$. Thus $B(r(e),v)>0$, and $(Bm^H)_{r(e)}=0$ implies $m^H_v=0$. Thus
$m^\psi_v=0$ for all $v\in S_n\setminus S_{n-1}$. Now we repeat the argument with $w\in
S_{n-1}\setminus S_{n-2}$, and find that $s(e)=w$ implies that $r(e)$ is in either $E\setminus H$
or $S_n\setminus S_{n-1}$; thus one of $B(r(e),w)m_w=0$ or $0\leq A_H(r(e),w)m^\psi_w\leq
\rho(A)m^\psi_{r(e)}$ forces $m^\psi_w=0$. A finite induction argument gives $m^\psi_v=0$ for all
$v\in H$, as claimed.

Since $m^\psi$ is a probability measure on $E^0$, we deduce that $m^{E\setminus H}$ is a probability measure too. Thus $m^{E\setminus H}$ is the unimodular Perron-Frobenius eigenvector $x$ for $A_{E\setminus H}$. Now the vector $m^\psi$ has block form $(x,0)$, and we have $Am^\psi=\rho(A)m^\psi$. Thus it follows from Proposition~\ref{idKMSbeta}\,\eqref{whenCK} that $\psi$ factors through a state $\phi$ of $C^*(E)$, and this is a KMS$_{\ln\rho(A)}$ state of $(C^*(E),\alpha)$. The formula in Proposition~\ref{idKMSbeta}\,\eqref{prea} implies that $\phi$ satisfies \eqref{phiwhensources}.

To see uniqueness, suppose $\phi'$ is a KMS$_{\ln\rho(A)}$ state of $(C^*(E),\alpha)$. Then we can run the argument of the preceding five paragraphs with $\psi=\phi'\circ q$, and deduce that $\phi'$ also satisfies \eqref{phiwhensources}. Thus $\phi'$ and $\phi$ agree on the spanning elements $s_\mu s_\nu^*$, and hence are equal.
\end{proof}

Corollary~\ref{kwcor}\,\eqref{noncrit} implies that for $\beta>\ln\rho(A)$ the simplex of KMS$_\beta$ states of
$(C^*(E),\alpha)$ has dimension $|S|$ where $S$ is the set of sources.
Corollary~\ref{kwcor}\,\eqref{noncrit} contains \cite[Theorem~4.4]{KW} because in both situations
considered there, the range of inverse temperatures is the same as ours. Our result applies to
arbitrary finite graphs, and not just ones whose vertex matrix has entries in $\{0,1\}$, as in
\cite{KW}. The uniqueness in part \eqref{CKsources} appears to be new.

\subsection{Cuntz-Pimsner algebras}
When we were proving Theorem~\ref{mainthmk=1} we were guided by the
machinery developed in \cite{LN}, and in particular by the construction in the proof of \cite[Theorem~2.1]{LN}. We believe that our direct approach will be more accessible to more readers, but putting our calculations in the context of \cite{LN} might provide an illuminating example for those interested in the general machine.

To apply the ideas of \cite{LN}, we view $\T C^*(E)$ as the Toeplitz algebra $\T(X)$ of the graph correspondence $X$ as in \cite[Theorem~4.1]{FR} and \cite[\S8]{R}. We consider only the Toeplitz algebra because there are competing definitions of the Cuntz-Pimsner algebra $\O_X$ in the literature, and when $E$ has sources the one used in \cite{LN} is not the one which gives $C^*(E)$\footnote{The ideal $I_X$ such that $\O_X=\T(X)/I_X$ in \cite{LN} is $\phi^{-1}(\KK(X))$ rather than $J_X:=\phi^{-1}(\KK(X))\cap (\ker\phi)^\perp$ (see the discussion in \cite[Chapter~8]{R} or \cite[\S4]{KW}). Kajiwara and Watatani deduce their results about KMS states on $C^*(E)$ for $E$ with sources from a version of \cite[Theorem~2.5]{LN} for algebras of the form $\T(X)/J_X$ \cite[Theorem~3.17]{KW}.}.

In the conventions of \cite{R}, $X$ has underlying set $C(E^1)$, with module actions given by
$(a\cdot x\cdot b)(e)=a(r(e))x(e)b(s(e))$ and inner product by $\langle
x,y\rangle(v)=\sum_{s(e)=v}\overline{x(e)}y(e)$. We can similarly realise the tensor powers
$X^{\otimes n}$ as bimodules with underlying set $C(E^n)$. The dynamics on $\T(X)$ is implemented
by the unitaries $U_t^n$ on $X^{\otimes n}=C(E^n)$ such that $U_t^n\delta_\mu=e^{itn}\delta_\mu$,
and hence the operator $\Gamma(e^{-\beta D})$ on the Fock module $\F(X)=\bigoplus_{n\geq
0}X^{\otimes n}$ considered in \cite{LN} is multiplication by $e^{-\beta n}$ on $X^{\otimes n}$.

A function $\epsilon\in \Sigma_\beta$ gives a trace $\tau_\epsilon: f\mapsto \sum_{v\in
E^0}f(v)\epsilon_v$ on $C(E^0)$. For each $n$, the identity operator on $C(E^n)=X^{\otimes n}$
satisfies $1_n=\sum_{\lambda\in E^n}\Theta_{\delta_\lambda,\delta_\lambda}$. Thus the induced trace
$F^n\tau_\epsilon$ on $A=C(E^0)\subset \LL(\F(X))$ constructed in \cite[Theorem~1.1]{LN} is given
by
\[
(F^n\tau_\epsilon)(\delta_w)=\sum_{\lambda\in E^n}\tau_\epsilon\big(\langle \delta_\lambda,(\delta_w \Gamma(e^{-\beta D}))(\delta_\lambda)\rangle\big)=\sum_{\lambda\in E^n}\tau_\epsilon\big(\langle \delta_\lambda,e^{-\beta n}\delta_w\cdot\delta_\lambda\rangle\big),
\]
where the dot in $\delta_w\cdot\delta_\lambda$ is the left action of $\delta_w \in C(E^0)$ on
$\delta_\lambda \in C(E^n)$. We can compute
\[
\langle \delta_\lambda,e^{-\beta n}\delta_w\cdot\delta_\lambda\rangle(v)
=\sum_{\nu\in E^nv}\overline{\delta_\lambda(\nu)}e^{-\beta n}(\delta_w\cdot\delta_\lambda)(\nu)=\begin{cases}
e^{-\beta n}\delta_w(r(\lambda))&\text{if $s(\lambda)=v$}\\
0&\text{otherwise,}
\end{cases}
\]
and therefore
\begin{align*}
(F^n\tau_\epsilon)(\delta_w)
    &=\sum_{\lambda\in E^n}\sum_{v\in E^0}\langle \delta_\lambda,e^{-\beta n}\delta_w\cdot\delta_\lambda\rangle(v)\epsilon_v\\
    &=\sum_{v\in E^0}\sum_{\lambda\in E^nv}e^{-\beta n}\delta_w(r(\lambda))\epsilon_v
    =\sum_{v\in E^0}e^{-\beta n}A^n(w,v)\epsilon_v.
\end{align*}
Thus the subinvariant vector $m=\sum_{n\geq 0}e^{-\beta n}A^n\epsilon=(I-e^{-\beta}A)^{-1}\epsilon$ in Theorem~\ref{mainthmk=1} is the trace $\sum_{n\geq 0}F^n\tau_\epsilon$ induced by the Fock module in the proof of \cite[Theorem~2.1]{LN}.

Laca and Neshveyev construct their KMS$_\beta$ state as follows. The trace $\tau_\epsilon$ on
$C(E^0)$ gives a finite-dimensional Hilbert space $H_\epsilon$ which carries a representation $M$
of $C(E^0)$ by multiplication operators. Rieffel induction then gives a representation
$\F(X)$-$\Ind M$ of $\LL(\F(X))$ on $\F(X)\otimes_{C(E^0)}H_\epsilon$. As above, the identity on
$X^{\otimes n}=C(E^n)$ is given by $1_n=\sum_{\lambda\in
E^n}\Theta_{\delta_\lambda,\delta_\lambda}$. Thus \cite[Theorem~1.1(ii)]{LN} implies that the trace
$\Tr_{\tau_\epsilon}$ satisfies
\[
\Tr_{\tau_\epsilon}(T)=\sum_{\lambda\in E^n}\tau_\epsilon\big(\langle\delta_\lambda,T\delta_\lambda\rangle\big)
=\sum_{\lambda\in E^n}\langle\delta_\lambda,T\delta_\lambda\rangle\epsilon_{s(\lambda)}.
\]
Thus the KMS$_\beta$ state $\phi$ in the proof of \cite[Theorem~2.1]{LN} is given by
\[
\phi(a)=\sum_{\lambda\in E^n}\langle\delta_\lambda,a\Gamma(e^{-\beta D})\delta_\lambda\rangle\epsilon_{s(\lambda)}
=\sum_{\lambda\in E^n}e^{-\beta n}\langle\delta_\lambda,a\delta_\lambda\rangle\epsilon_{s(\lambda)}.
\]
For $a=s_\mu s_\nu^*$, the inner product $\langle\delta_\lambda,a\delta_\lambda\rangle$ vanishes
unless $\mu=\nu$ and $\lambda=\mu\lambda'$, and then
$\langle\delta_\lambda,a\delta_\lambda\rangle=1$. Thus
\begin{align*}
\phi(s_\mu s_\mu^*)&=e^{-\beta|\mu|}\sum_{\lambda'\in s(\mu)E^*}e^{-\beta|\lambda'|}\epsilon_{s(\lambda')}
=e^{-\beta|\mu|}\sum_{n\geq 0}\big(e^{-\beta n}A^n\epsilon\big)_{s(\mu)}\\
&=e^{-\beta|\mu|}\big((1-e^{-\beta n}A^n)^{-1}\epsilon\big)_{s(\mu)}=e^{-\beta|\mu|}m_{s(\mu)}.
\end{align*}
Thus the state constructed in \cite[Theorem~2.1]{LN} is exactly the same as ours. In fact, the
representation $\F(X)$-$\Ind M$ is unitarily equivalent to the representation $\pi_{Q,T}$ on
$\ell^2(E^*)$ in the proof of Theorem~\ref{mainthmk=1} via the unitary which sends
$\delta_\mu\otimes \delta_{s(\mu)}$ to $\epsilon_{s(\mu)}^{1/2}h_\mu$.

\subsection{Groupoid algebras}
In \cite[Proposition~II.5.4]{Ren} Renault describes the KMS states for the $C^*$-algebras of
principal groupoids, which are the groupoids with no isotropy (see also \cite{KR}). Neshveyev has
recently extended Renault's result to groupoids with isotropy \cite[Theorem~1.3]{N}. Since graph
algebras have a groupoid model, Neshveyev's result applies in our situation. We will see that this
approach requires calculations similar to those in the proof of Theorem~\ref{mainthmk=1}, but they
may provide an instructive example for those interested in Neshveyev's approach. To avoid having
to adjust the standard groupoid model of \cite{KPRR} or \cite{Pat}, we assume that $E$ has no
sources.

The groupoid model $G$ for the Toeplitz algebra has unit space $G^{(0)}:=E^*\cup E^\infty$, and the
sets $Z(\alpha):=\{\alpha\beta:\beta\in G^{(0)}\}$ for $\alpha\in E^*$ form a basis of open compact
sets for the topology on $G^{(0)}$ (see \cite[Proposition~3.3]{Pat}). We have
\[
G=\big\{(\alpha y,|\gamma|-|\alpha|,\gamma y): y\in G^{(0)}, \alpha,\gamma\in E^*,  s(\alpha) = s(\gamma) = r(y)\big\},
\]
$r(y,k,z)=y$ and $s(y,k,z)=z$. The sets
\[
Z(\alpha,\gamma):=\{(\alpha y,|\gamma|-|\alpha|,\gamma y):y\in G^{(0)}, s(\alpha) = s(\gamma) = r(y)\}
\]
form a basis of compact-open sets for the topology on $G$. When we view $\T C^*(E)$ as $C^*(G)$, our dynamics is the one studied in \cite{N} for the cocycle $c:G\to\RR$ defined by $c(y,k,z)=k$. To construct KMS states on $C^*(G)$ using Neshveyev's theorem, we need to find quasi-invariant measures on $G^{(0)}$ and check some conditions involving the isotropy.

Fix $\beta>\ln\rho(A)$ and $\epsilon$ as in Theorem~\ref{mainthmk=1}\,\eqref{c}. Consider the
numbers $\{\Delta_\alpha:\alpha\in E^*\}$ constructed in the proof of that theorem. Since
$\sum_{\alpha\in E^*}\Delta_\alpha=1$, the discrete measure $\sum_{\alpha\in
E^*}\Delta_\alpha\delta_\alpha$ is a probability measure on $E^*$, and hence gives a probability
measure $\mu$ on $G^{(0)}$ such that
$\mu(Z(\alpha))=\sum_{r(\lambda)=s(\alpha)}\Delta_{\alpha\lambda}$ and $\mu(E^\infty)=0$. To see
that $\mu$ is quasi-invariant, consider the basic open set $U=Z(\alpha,s(\alpha))=\{(\alpha
y,-|\alpha|,y)\}$. Then $r$ and $s$ are homeomorphisms on $U$, and $T:=s|_U\circ(r|_U)^{-1}$ maps
$\alpha y$ to $y$. For $\lambda\in E^*$, we have
\[
T_*\mu(Z(\lambda))=\mu(T^{-1}(Z(\lambda)))
=\begin{cases}0&\text{if $r(\lambda)\not= s(\alpha)$}\\
\mu(Z(\alpha\lambda))&\text{if $r(\lambda)= s(\alpha)$.}\end{cases}
\]
Now the calculation
\begin{align*}
\mu(Z(\alpha\lambda))&=\sum_{r(\gamma)=s(\lambda)}\Delta_{\alpha\lambda\gamma}
=\sum_{r(\gamma)=s(\lambda)}e^{-\beta|\alpha\lambda\gamma|}\epsilon_{s(\gamma)}\\
&=e^{-\beta|\alpha|}\sum_{r(\gamma)=s(\lambda)}e^{-\beta|\lambda\gamma|}\epsilon_{s(\gamma)}=e^{-\beta|\alpha|}\mu(Z(\lambda))
\end{align*}
implies that $\frac{dT_*\mu}{d\mu}=e^{-\beta|\alpha|}$, and hence that $\mu$ is quasi-invariant with the correct cocycle (see the discussion preceding \cite[Theorem~1.3]{N}).

In the groupoid $G$, a unit $y$ has nonzero isotropy group $G_y^y=\{k:(y,k,y)\in G\}$ only if $y$ is an infinite path. Thus $\mu(\{y:G_y^y\not=\{0\}\})=0$, and the measurable fields of states described in conditions (ii) and (iii) of \cite[Theorem~1.3]{N} give no extra KMS states (see the comments following that theorem in \cite{N}). So Neshveyev's theorem gives a KMS$_\beta$ state $\psi_\epsilon$ on $\T C^*(E)$ which is the composition of the state $\mu_*$ on $C(G^{(0)})$ with the expectation $E$ of $C^*(G)$ onto $C(G^{(0)})$. We can check on elements of the form $\chi_{Z(\alpha)}=E(s_\alpha s_\alpha^*)$ that $\psi_\epsilon$ is the state $\phi_\epsilon$ in Theorem~\ref{mainthmk=1}.

We now suppose that $E$ is strongly connected, and consider states on $C^*(E)$. The groupoid model for $C^*(E)$ is the reduction of $G$ to $E^\infty$, and we can construct measures on $E^\infty$ by constructing functionals on $C(E^\infty)=\varinjlim C(E^n)$. Suppose that $\rho(A)>1$, and $x$ is the unimodular Perron-Frobenius eigenvector $A$. Then the relation $\mu_*(Z(\alpha))=\rho(A)^{-|\alpha|}x_{s(\alpha)}$ completely characterises a quasi-invariant measure $\mu$ on $E^\infty$. The paths $y$ with nonzero isotropy have the form $\alpha\gamma\gamma\gamma\cdots$ for some $\alpha,\gamma\in E^*$ with $s(\gamma)=r(\gamma)=s(\alpha)$; since $E$ is finite, there are countably many such paths, and since $\{\alpha\gamma\gamma\gamma\cdots\}=\bigcap_{n=1}^\infty Z(\alpha\gamma^n)$ we have $\mu(\{\alpha\gamma\gamma\gamma\cdots\})\leq \rho(A)^{-|\alpha|-n}$ for all $n$, and $\mu(\{\alpha\gamma\gamma\gamma\cdots\})=0$. Thus the units with nonzero isotropy have measure zero, and again the measurable fields of states in \cite[Theorem~1.3, (ii) and (iii)]{N} give no additional KMS states.

Now suppose that $\rho(A)=1$, in which case Lemma~\ref{valuesrho} implies that $E$ consists of a single cycle, say with $n$ edges. Then the path space $E^\infty$ has $n$ points, the
Perron-Frobenius eigenvector $x$ is constant, and $\mu$ is the uniformly distributed probability
measure. The isotropy group at $y\in E^\infty$ is $G_y^y=\{(y,nk,y):k\in \ZZ\}\cong n\ZZ$, so
Neshveyev's condition (iii) kicks in. His cocycle $c:(y,k,z)\mapsto k$ is injective on $G_y^y$, and
hence the only state $\phi$ of $C^*(G_y^y)=C^*(n\ZZ)$ such that $\phi(u_{nk})=0$ for all $k\in
c^{-1}(0)$ is evaluation at $0$ on $C_c(n\ZZ)\subset C^*(n\ZZ)$. So there is just one measurable
field satisfying (iii), and Neshveyev's theorem gives just one KMS$_0$ state on $C^*(E)$. This is
consistent, because he too requires his KMS$_0$ states to be invariant (just before
\cite[Theorem~1.3]{N}), and he makes it clear in the proof of the theorem that the purpose of (iii)
is to ensure invariance.

\appendix

\section{The spectral radius of a vertex matrix}

We now discuss some properties of the vertex matrices of finite graphs which we have used at various points. These are non-negative matrices in the sense of \cite[Chapter~1]{Seneta}, and have integer entries, so there must be a good chance that these results are known. But it is easy enough to give quick proofs.

\begin{lem}\label{valuesrho}
Suppose that $E$ is a finite graph, and let $A\in M_{E^0}(\NN)$ be the vertex matrix with entries $A(v,w)=|vE^1w|$.
\begin{enumerate}
\item\label{geq1cycle} If $E$ contains at least one cycle, then the spectral radius of $A$ satisfies $\rho(A)\geq 1$.
\item\label{nocycle} If $E$ contains no cycles, then $\rho(A)=0$.
\item\label{1cycle} If $E$ consists of disjoint cycles, then $\rho(A)=1$.
\item\label{notcycle} If $E$ is strongly connected and not a cycle, then $\rho(A)>1$.
\end{enumerate}
\end{lem}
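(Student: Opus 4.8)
The plan is to prove the four parts of Lemma~\ref{valuesrho} by exploiting the combinatorial meaning of the powers $A^n$: the entry $A^n(v,w)$ counts paths of length $n$ from $w$ to $v$, together with the standard fact that $\rho(A)=\lim_n \|A^n\|^{1/n}$ for any matrix norm, and that for a non-negative matrix one has monotonicity $0\le B\le C\Rightarrow \rho(B)\le\rho(C)$. I would also use that passing to a principal submatrix (restricting to a subset of vertices, i.e.\ a subgraph on those vertices with all edges between them) does not increase the spectral radius, since such a submatrix is dominated entrywise after filling in with zeros.

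For part~\eqref{geq1cycle}, suppose $E$ contains a cycle through some vertex $v$ of length $\ell\ge 1$. Then for every $k\ge 0$ there is at least one path of length $k\ell$ from $v$ to $v$, so $A^{k\ell}(v,v)\ge 1$, hence $\|A^{k\ell}\|\ge 1$ in, say, the max-entry norm, and therefore $\rho(A)=\lim_n\|A^n\|^{1/n}\ge 1$. For part~\eqref{nocycle}, if $E$ has no cycles then any path has length at most $|E^0|-1$ (a longer path would have to repeat a vertex, creating a cycle), so $A^n=0$ for $n\ge|E^0|$; thus $A$ is nilpotent and $\rho(A)=0$. Part~\eqref{1cycle} combines these: on a single cycle the vertex matrix is a cyclic permutation matrix, which has spectral radius $1$; a disjoint union of cycles has block-diagonal vertex matrix with each block a permutation matrix, so $\rho(A)=\max$ of the blocks' spectral radii $=1$. (Alternatively: $\rho(A)\ge 1$ by~\eqref{geq1cycle}, and $\rho(A)\le 1$ because every row sum and every column sum of $A$ equals $1$, so $\|A\|_\infty=1$.)

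The substantive part is~\eqref{notcycle}. Here $E$ is strongly connected, so by the Perron--Frobenius theorem quoted before Theorem~\ref{critical-irreducible} (that is, \cite[Theorem~1.5]{Seneta}), $\rho(A)$ is an eigenvalue with a strictly positive eigenvector $x$, and $\rho(A)\ge 1$ by~\eqref{geq1cycle}. Suppose for contradiction that $\rho(A)=1$, so $Ax=x$ with $x>0$. Summing the coordinates of $Ax=x$ gives $\sum_v\big(\sum_w A(v,w)x_w\big)=\sum_w x_w$, i.e.\ $\sum_w\big(\sum_v A(v,w)\big)x_w=\sum_w x_w$; since each column sum $\sum_v A(v,w)=|E^1 s^{-1}(w)|\ge 1$ (strong connectedness forces every vertex to emit at least one edge — otherwise it would be a sink unreachable-from, contradicting that $vE^*w\ne\emptyset$ for the relevant pairs) and $x_w>0$, we conclude every column sum is exactly $1$. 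By the same argument applied to the relation $Ax=x$ read as $\sum_w A(v,w)x_w=x_v$ one shows, using that $x>0$, that for each $v$ there is exactly one $w$ with $A(v,w)\ne 0$ and that $A(v,w)=1$: indeed if some row had two entries or an entry $\ge 2$, the total count of edges $|E^1|=\sum_{v,w}A(v,w)$ would exceed $|E^0|$, contradicting that the column sums are all $1$ (which gives $|E^1|=|E^0|$). So $A$ is a permutation matrix; being irreducible, it is a single cyclic permutation, which means $E$ is a single cycle — contradicting the hypothesis. Hence $\rho(A)>1$. The main obstacle is packaging this counting argument cleanly: the cleanest route is to observe $|E^1|=\sum_{v,w}A(v,w)=\sum_w(\text{column sum})_w$ and also $=\sum_v(\text{row sum})_v$, deduce from $Ax=x$, $x>0$ that all column sums equal $1$, hence $|E^1|=|E^0|$, hence every vertex emits exactly one edge, hence $A$ is a permutation matrix, hence (by irreducibility) a cycle.
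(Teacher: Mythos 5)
Your proof is correct, and at the two substantive points it takes a genuinely different route from the paper's. For part~\eqref{geq1cycle} the paper first reduces to the irreducible classes via Seneta's block-triangular structure theory and then argues from $\Tr A^m\geq |E^0|$ (the trace being the sum of the eigenvalues), whereas you bypass the decomposition entirely and get $\rho(A)\geq 1$ from Gelfand's formula along the subsequence $n=k\ell$; this is more elementary, at the cost of invoking norm equivalence rather than pure eigenvalue counting. For part~\eqref{notcycle} the paper stays with the trace method: it produces two distinct return paths at a common vertex whose first edges differ, so that $A^n(v,v)\geq 2$, $\Tr A^n\geq |E^0|+1$, and hence $\rho(A)^n>1$; you instead suppose $\rho(A)=1$, feed the strictly positive Perron--Frobenius eigenvector into a counting argument (all column sums equal $1$, hence $|E^1|=|E^0|$, hence $A$ is a permutation matrix, hence by irreducibility a single cycle) and reach a contradiction. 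Both work: the paper's argument never needs the eigenvector, while yours yields the slightly sharper structural fact that an irreducible integer matrix with $\rho(A)=1$ is a cyclic permutation matrix. One step you should make explicit: your edge count (``if some row had two entries or an entry $\geq 2$, then $|E^1|>|E^0|$'') only gives a contradiction if no row of $A$ is zero, i.e.\ if every vertex \emph{receives} an edge; this follows from strong connectedness exactly as in your parenthetical for the column (source) side, and is in fact the opening line of the paper's proof of \eqref{notcycle}, but as written you justified only the source side. Parts~\eqref{nocycle} and~\eqref{1cycle} are essentially the paper's arguments, with nilpotency of $A$ replacing the upper-triangular ordering of the vertices.
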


\begin{proof}
\eqref{geq1cycle} For each $v\in E^0$ which lies on a cycle, we consider the set $C_v$ of vertices
$w$ which lie on a return path based at $v$. Two such $C_v$ and $C_u$ are either equal or disjoint;
the distinct subsets $C_1,\cdots,C_n$ are the classes (essential and inessential) which Seneta
discusses in \cite[Page~12]{Seneta}. By Seneta's structure theory, we can order the vertices so
that $A$ is a block upper triangular matrix whose square diagonal blocks are $1\times 1$ blocks of
the form $(0)$ and the irreducible submatrices $A_i$ of $A$ associated to the classes $C_i$. (Our matrix is upper rather than lower triangular because of the way we defined the
vertex matrix.) Since the determinant of a block-triangular matrix is the product of the
determinants of the diagonal blocks, the spectrum of $A$ is either $\bigcup_{i=1}^n\sigma(A_i)$ or
$\big(\bigcup_{i=1}^n\sigma(A_i)\big)\cup \{0\}$. So it suffices for us to prove that, if $A$ is
irreducible, as the $A_i$ are, then $\rho(A)\geq 1$.

By irreducibility, for each $v\in E^0$ there exists $m_v>0$ such that $A^{m_v}(v,v)>0$, and since
$A$ is an integer matrix, $A^{m_v}(v,v)\geq 1$. Now with $m:=\prod_{v\in E^0}m_v$ we have
$A^m(w,w)\geq 1$ for all $w\in E^0$. Thus the trace of $A^m$ satisfies $\Tr A^m\geq |E^0|$. Since
the trace is the sum of the complex eigenvalues counted according to multiplicity, at least one of
these eigenvalues has absolute value at least $1$, giving $\rho(A^m)\geq 1$. The spectral radius
formula implies that $\rho(A^m)=\rho(A)^m$, so we must have $\rho(A)\geq 1$ too.

\eqref{nocycle} In this case the structure theory of \cite[\S1.2]{Seneta} says that we can order the vertices so that $A$ is upper triangular with $0$s down the diagonal, and hence $\sigma(A)=\{0\}$.

\eqref{1cycle} In this case $A$ is a permutation matrix, and every eigenvalue is a root of unity.

\eqref{notcycle} Since $E$ is strongly connected, every vertex receives an edge. Since $E$ is not a
cycle, at least one vertex $v$ receives $2$ edges, $e$ and $f$, say. There are paths from $v$ to
$s(e)$ and $s(f)$, and hence there are cycles $\mu$ and $\nu$ based at $v$ such that $\mu_1=e$ and
$\nu_1=f$. Now $\mu^{|\nu|}$ and $\nu^{|\mu|}$ are distinct elements of $vE^{|\mu|\,|\nu|}v$, and
we have $A^{|\mu|\,|\nu|}(v,v)\geq 2$. With $m$ as in the proof of \eqref{geq1cycle},
$n:=|\mu|\,|\nu|m$ satisfies $A^n(w,w)\geq 1$ for all $w$ and $A^n(v,v)>1$, and the arguments at
the end of the proof of \eqref{geq1cycle} show that $\rho(A)>1$.
\end{proof}

\end{document}